\newtheorem{theorem}{Theorem}
\newtheorem{axiom}{Axiom}
\newtheorem{definition}[axiom]{Definition}
\newtheorem{lemma}[theorem]{Lemma}
\newtheorem{proposition}[theorem]{Proposition}
\newenvironment{remark}{\rem\rm}{\endrem}
\newcounter{unnumber}
\newenvironment{proof}{\prf\rm}{\hfill{$\blacksquare$}\endprf}
\newcommand{\R}{\mathbb{R}}%
\newcommand{\N}{\mathbb{N}}%
\newcommand{\ol}{\overline}%
\newcommand{\n}{{\nabla}}
\newcommand{\ds}{\displaystyle}
\newcommand{\To}{\longrightarrow}
\def\a{\alpha}
\def\b{\beta}
\def\g{\gamma}
\def\l{\lambda}
\def\<{\langle}
\def\>{\rangle}
\DeclareMathOperator*\prox{prox}%
\DeclareMathOperator*\argmin{argmin}
\DeclareMathOperator*\crit{crit}
\author{Cristian Daniel Alecsa\thanks{Tiberiu Popoviciu Institute of Numerical Analysis, Romanian Academy, Cluj-Napoca, Romania and  Department of Mathematics, Babes-Bolyai University, Cluj-Napoca, Romania, e-mail:  cristian.alecsa@math.ubbcluj.ro} \and Szil\'{a}rd Csaba L\'{a}szl\'{o} \thanks{Technical University of Cluj-Napoca, Department of Mathematics, Str. Memorandumului nr. 28, 400114 Cluj-Napoca, Romania, e-mail: laszlosziszi@yahoo.com} \and Titus Pin\c ta\thanks{Department of Mathematics, Babes-Bolyai University, Cluj-Napoca, Romania, e-mail:  titus.pinta@gmail.com} }
\title{An extension of the second order dynamical system that models Nesterov's convex gradient method\thanks{This work was supported by a grant of Ministry of Research and Innovation, CNCS - UEFISCDI, project number PN-III-P1-1.1-TE-2016-0266.}}
\begin{document}
\maketitle

\noindent \textbf{Abstract.}  In this paper we deal with a general second order continuous dynamical system associated to a convex minimization problem with a Fr\`echet differentiable objective function. We show that inertial algorithms, such as Nesterov's algorithm, can be obtained via the natural explicit discretization from our dynamical system.  Our dynamical system can be viewed as a perturbed version of the heavy ball method with vanishing damping, however the perturbation is made in the argument of the gradient of the objective function. This perturbation seems to have a smoothing effect for the energy error and eliminates the oscillations obtained for this error in the case of the heavy ball method with vanishing damping, as some numerical experiments show. We prove that the  value of the objective function in a generated trajectory converges in order $\mathcal{O}(1/t^2)$ to the global minimum of the objective function. Moreover, we obtain that a  trajectory  generated by the dynamical system converges to a minimum point of the objective function.  
\vspace{1ex}

\noindent \textbf{Key Words.}  convex optimization,  heavy ball method, continuous second order dynamical system, convergence rate, inertial algorithm \vspace{1ex}

\noindent \textbf{AMS subject classification.}  34G20, 47J25, 90C25, 90C30, 65K10

\section{Introduction}


Since Su, Boyd and Cand\`es  in  \cite{su-boyd-candes} showed that Nesterov's accelerated convex gradient method  has the exact limit the second order differential equation that governs the heavy ball system with vanishing damping, that is,
 \begin{equation}\label{ee11}
\ddot{x}(t)+\frac{\a}{t}\dot{x}(t)+\n g(x(t))=0,\,\,x(t_0)=u_0,\,\dot{x}(t_0)=v_0,\,t_0>0,\,u_0,v_0\in\R^m,
\end{equation}
with $\a=3$, the latter system has been intensively studied in the literature in connection to the minimization problem
$\inf_{x\in\R^m}g(x).$ Here $g:\R^m\To\R$ is a convex Fr\`echet differentiable function with Lipschitz continuous gradient.

In \cite{su-boyd-candes} the authors proved that $$g(x(t))-\min g=\mathcal{O}\left(\frac{1}{t^2}\right)$$
for every $\a\ge 3,$ however they did not show the convergence of a generated trajectory to a minimum of the objective function $g.$

In  \cite{att-c-p-r-math-pr2018}, Attouch, Chbani,  Peypouquet and Redont considered the case $\a>3$ in \eqref{ee11}, and showed that  the  generated  trajectory $x(t)$  converges  to  a  minimizer  of $g$ as $t\To+\infty$. Actually in \cite{att-c-p-r-math-pr2018} the authors considered the perturbed version of the heavy ball system with vanishing damping, that is,
\begin{equation}\label{ee12}
\ddot{x}(t)+\frac{\a}{t}\dot{x}(t)+\n g(x(t))=h(t),\,\,x(t_0)=u_0,\,\dot{x}(t_0)=v_0,\,t_0>0,\,u_0,v_0\in\R^m,
\end{equation}
where $h:[t_0,+\infty)\To\R$ is a small perturbation therm that satisfies $\int_{t_0}^{+\infty}t\|h(t)\|dt<+\infty.$ Beside the convergence of a generated trajectory $x(t)$ to a minimizer of $g$,  they showed that also in this case the convergence rate of the objective function along the trajectory, that is $g(x(t))-\min g,$ is of order $\mathcal{O}\left(\frac{1}{t^2}\right)$.

Another perturbed version of \eqref{ee11} was studied by Attouch,   Peypouquet and Redont in \cite{att-p-r-jde2016}. They assumed that the objective $g$ is twice continuously differentiable and the perturbation of their system is made at the damping therm. More precisely, they studied the dynamical system with Hessian driven damping
\begin{equation}\label{ee13}
\ddot{x}(t)+\frac{\a}{t}\dot{x}(t)+\b\n^2g(x(t))\dot{x}(t)+\n g(x(t))=0,\,\,x(t_0)=u_0,\,\dot{x}(t_0)=v_0,\,t_0>0,\,u_0,v_0\in\R^m,
\end{equation}
where $\a>0$ and $\b>0.$
In case $\a>3,\,\b>0$ they showed the convergence of a generated trajectory to a minimizer of $g$. Moreover, they obtained that in this case the convergence rate of the objective function along the trajectory, that is $g(x(t))-\min g,$ is of order ${o}\left(\frac{1}{t^2}\right)$.

Further,  Attouch, Chbani and Riahi in \cite{att-c-r-arx2017} studied the  subcritical  case $\a\le 3$ and  they proved that in this case the  convergence rates of the objective function  $g$ along  the  trajectory generated by \eqref{ee11}, i.e $g(x(t))-\min g,$ is of order $\mathcal{O}\left(\frac{1}{t^{\frac23\a}}\right)$.

Another approach is due to Aujol, Dossal and Rondepierre \cite{ADR}, who assumed that beside convexity, the objective $g$ in \eqref{ee11} satisfies some geometrical conditions, such as the {\L}ojasiewicz property. The importance of their results obtained in \cite{ADR} is underlined by the fact that applying the classical Nesterov scheme on a convex objective function without studying its geometrical properties may lead to sub-optimal algorithms.

 It is worth mentioning the work of Aujol and Dossal  \cite{AD},  who did not assumed the convexity of $g,$ but the convexity of the function $(g(x(t))-g(x^*))^\b$, where $\b$ is strongly related to the damping parameter $\a$ and $x^*$ is a global minimizer of $g.$ Under these assumptions, they obtained some general convergence rates and also the convergence of the generated trajectories of \eqref{ee11}. In case $\b=1$ they results reduce to the results obtained in \cite{att-c-p-r-math-pr2018,att-p-r-jde2016,att-c-r-arx2017}.

However, the convergence of the  trajectories generated by  the continuous heavy ball system with vanishing damping \eqref{ee11}, in the general case when $g$ is nonconvex is still an open question.
 Some important steps in this direction have been made in \cite{BCL-AA} (see also \cite{BCL}), where convergence of the trajectories of a perturbed system,  have been obtained in a nonconvex setting. More precisely in \cite{BCL-AA}  is considered the system
  \begin{equation}\label{eee11}
\ddot{x}(t)+\left(\g+\frac{\a}{t}\right)\dot{x}(t)+\n g(x(t))=0,\, x(t_0)=u_0,\,\dot{x}(t_0)=v_0,
\end{equation}
where $t_0>0,u_0,v_0\in\R^m,\,\g>0,\,\a\in\R.$  Note that here $\a$ can take nonpositive values. For $\a=0$ we recover the dynamical system studied in \cite{BBJ}. According to \cite{BCL-AA}, the trajectory  generated by the dynamical system \eqref{eee11} converges  to a critical point of $g,$ if a regularization of $g$ satisfies the Kurdyka-{\L}ojasiewicz property.

Further results concerning the heavy ball method and its extensions can be found in \cite{alv-att-bolte-red,AC,AGR,BM,CAG1,CAG2}.

\subsection{An extension of the heavy ball method and the Nesterov type algorithms obtained via explicit discretization}

What one can notice concerning the heavy ball system and its variants is, that despite of the result of Su et al. \cite{su-boyd-candes}, this system will never give through the natural implicit/explicit discretization the Nesterov algorithm. This is due to the fact that the gradient of $g$ is evaluated in $x(t),$ and this via discretization will become $g(x_n)$, (or $g(x_{n+1})$) and never of the form $g(y_n),\,y_n=x_n+\a_n(x_n-x_{n-1})$ as Nesterov's gradient method requires.
Another observation is that using the same approach as in \cite{su-boyd-candes}, one can show, see \cite{L}, that  \eqref{ee11}, (and also \eqref{eee11}),  models beside Nesterov's algorithm other algorithms too. In this paper we overcome the deficiency emphasized above, by introducing a dynamical system that via explicit discretization leads to inertial algorithms of gradient type. To this end, let us consider the optimization problem
\begin{equation}\label{opt-pb} (P) \ \inf_{x\in\R^m}g(x) \end{equation}
where $g:\R^m\To \R$ is a convex Fr\'{e}chet differentiable, function with $L_g$-Lipschitz continuous gradient, i.e. there exists $L_g\ge 0$ such that $\|\n g(x)-\n g(y)\|\le L_g\|x-y\|$ for all $x,y\in \R^n.$

We associate to \eqref{opt-pb} the following second order dynamical system:
\begin{equation}\label{dysy}
\left\{
\begin{array}{ll}
\ddot{x}(t)+\frac{\a}{t}\dot{x}(t)+\nabla g\left(x(t)+\left(\g+\frac{\b}{t}\right)\dot{x}(t)\right)=0\\
x(t_0)=u_0,\,\dot{x}(t_0)=v_0,
\end{array}
\right.
\end{equation}
where $u_0,v_0\in \R^m,$ $t_0\ge0$ and $\a>0,\,\b\in\R,\,\g\ge 0.$
\begin{remark}\label{rex}
The connection of \eqref{dysy} with the  heavy ball system with vanishing damping  \eqref{ee11} is obvious, the latter one can be obtained from \eqref{dysy} for $\g=\b=0.$
 The study of the dynamical system \eqref{dysy} in connection to the optimization problem \eqref{opt-pb} is motivated by the following facts:
 
  {\bf 1.} The dynamical system \eqref{dysy}  leads via explicit discretization to inertial algorithms.  In particular Nesterov's algorithm can be obtained via this natural discretization. 
  
  {\bf 2.} A generated trajectory and the objective function value in this trajectory in general have a better convergence behaviour than a trajectory generated by the heavy ball system with vanishing damping \eqref{ee11}, as some numerical examples shows.
  
  {\bf 3.} The same numerical experiments reveal that the perturbation term $\left(\g+\frac{\b}{t}\right)\dot{x}(t)$ in the argument of the  gradient of the objective function $g$ has a smoothing effect and annihilates the oscillations obtained in case of the dynamical system \eqref{ee11} for the errors 
  $g(x(t)-\min g$ and $\|x(t)-x^*\|,$ where $x^*$ is a minimizer of $g.$
  
  {\bf 4.} A trajectory  $x(t)$ generated by the dynamical system \eqref{dysy} ensures the convergence rate of order $\mathcal{O}\left(\frac{1}{t^2}\right)$ for the decay $g\left(x(t)+\left(\g+\frac{\b}{t}\right)\dot{x}(t)\right)-\min g$, provided it holds that $\a>3,\,\g> 0,\,\b\in\R$  or $\a\ge3,\,g=0,\,\b\ge 0.$
 
 {\bf 5.} A trajectory  $x(t)$ generated by the dynamical system \eqref{dysy} ensures the same convergence rate of order $\mathcal{O}\left(\frac{1}{t^2}\right)$ for the decay $g(x(t))-\min g$  as the heavy ball method with vanishing damping, for the cases $\a>3,\,\g> 0,\,\b\in\R$ and $\a>3,\,\g=0,\,\b\ge 0.$ 
    
 {\bf 6.} The convergence of a generated trajectory $x(t)$ to a minimizer of the objective function $g$ can be obtained in case $\a>3,\,\g>0$ and $\b\in\R$ and also in the case $\a>3,\,\g=0$ and $\b\ge 0.$ 
  \end{remark}
  
  \begin{remark}
  
  Nevertheless, in case $\g=0$ and $\b<0$ the dynamical system \eqref{dysy} can generate periodical solutions, hence the convergence of a generated trajectory to a minimizer of the objective is hopeless. To illustrate this fact, for $\b<0,\,\a>0,\,\g=0$ consider the strongly convex objective function $g:\R\To\R,$ $g(x)=\frac{\a}{-2\b} x^2$. Then, taking into account that $\g=0$ the dynamical system \eqref{dysy} becomes
 \begin{equation}\label{exdysy}
\left\{
\begin{array}{ll}
\ddot{x}(t)+\frac{\a}{-\b}x(t)=0,\\
x(0)=0,\,\dot{x}(0)=\sqrt{\frac{\a}{-\b}}.
\end{array}
\right.
\end{equation}
Now, the periodical function $x(t)=\sin\sqrt{\frac{\a}{-\b}} t$ is a solution of \eqref{exdysy}, consequently do not exist the limit $\lim_{t\To+\infty}x(t).$
\end{remark}

We emphasize, that despite the novelty of the dynamical system \eqref{dysy}, its formulation is natural since by explicit discretization leads to inertial gradient methods, in particular the famous Polyak and Nesterov numerical schemes can be obtained from \eqref{dysy}. For other inertial algorithms of gradient type we refer to \cite{BCL1,ALV,L}.

Indeed, explicit discretization of \eqref{dysy}, with the constant stepsize $h$,  $t_n=nh,\,\,x_n=x(t_n)$ leads to
$$\frac{x_{n+1}-2x_n+x_{n-1}}{h^2}+\frac{\a}{nh^2}(x_n-x_{n-1})+\n g\left(x_n+\left(\frac{\g}{h}+\frac{\b}{nh^2}\right)(x_n-x_{n-1})\right)=0.$$

Equivalently, the latter equation can be written as
\begin{equation}\label{discretization}
x_{n+1}=x_n+\left(1-\frac{\a}{n}\right)(x_n-x_{n-1})-h^2\n g\left(x_n+\left(\frac{\g}{h}+\frac{\b}{nh^2}\right)(x_n-x_{n-1})\right).
\end{equation}

Now, setting $h^2=s$ and denoting the constants $\frac{\g}{h}$  and $\frac{\b}{h^2}$ still with $\g$ and $\b$, we get the following general inertial algorithm:

Let $x_0,x_{-1}\in\R^m$ and for all $n\in\N$ consider the sequences
\begin{equation}\label{alggentemp}
\left\{
\begin{array}{lll}
y_n=x_n+\left(1-\frac{\a}{n}\right)(x_n-x_{n-1})\\
z_n=x_n+\left({\g}+\frac{\b}{n}\right)(x_n-x_{n-1})\\
x_{n+1}=y_n-s\n g\left(z_n\right).
\end{array}
\right.
\end{equation}
However, from a practical point of view it is more convenient to work with the following equivalent formulation:
Let $x_0,x_{-1}\in\R^m$ and for all $n\in\N$ consider the sequences
\begin{equation}\label{alggen}
\left\{
\begin{array}{lll}
y_n=x_n+\frac{n}{n+\a}(x_n-x_{n-1})\\
\\
z_n=x_n+\frac{\g n+\b}{n+\a}(x_n-x_{n-1})\\
\\
x_{n+1}=y_n-s\n g\left(z_n\right),
\end{array}
\right.
\end{equation}
where $\a>0,\,\b\in\R$ and $\g\ge 0.$

\begin{remark}
Notice that for $\g=\b=0$ we obtain a variant of Polyak's algorithm \cite{poly} and for $\g=1$ and $\b=0$ we obtain Nesterov's algorithm \cite{nesterov83}. An interesting fact is that Algorithm \eqref{alggen} allows different inertial steps and this approach seems to be new in the literature.
\end{remark}

\begin{remark}
Independently to us, very recently, a  system similar to \eqref{dysy} was studied by Muehlebach and Jordan in \cite{MJ} and they show that Nesterov's accelerated gradient method can be obtained from their system via a semi-implicit Euler discretization scheme. They considered a constant damping instead of $\frac{\a}{t}$ and also took $\b=0.$ Further they also treated the ODE
$$\ddot{x}(t)+\frac{3}{t+2}\dot{x}(t)+s\nabla g\left(x(t)+\frac{t-1}{t+2}\dot{x}(t)\right)=0$$ which for $s=1$ is obviously equivalent to the particular case of the governing ODE from \eqref{dysy}, obtained for $\a=3,\,\g=1,\,\b=-3.$ 
However, the freedom of controlling the parameters $\b$ and $\g$ in \eqref{dysy} is essential as the next numerical experiments show.
\end{remark}

\subsection{Some numerical experiments}

In this section we consider two numerical experiments for the trajectories generated by the dynamical system \eqref{dysy} for a strongly convex and for a convex but not strongly convex objective function.

Everywhere in the following numerical experiments we consider the continuous time dynamical system \eqref{dysy}, solved numerically with a Runge Kutta 4-5 (ode45) adaptive method in MATLAB. We solved the dynamical system with ode45 on the interval $[1, 100]$  and the plot in Fig. 1 - Fig. 2 show the energy error $|g(x(t)) - g(x^*)|$ on the left, and the iterate error $\|x(t) - x^*\|$ on the right.

We show the evolution of the two errors with respect to different values for $\alpha$, $\beta$ and $\gamma$, including the case that yields the Heavy Ball with Friction. One can observe that the best choice is not $\g=\b=0$ which is the case of heavy ball system with vanishing damping \eqref{ee11}.

{\bf 1.} Consider the function $g:\R^2\To\R,$
$$g(x,y)=2x^2 + 5y^2 -4x+10y+7.$$
Then, $g$ is a strongly convex function, $\n g(x,y)=(4x-4,10y+10)$, further $x^*=(1,-1)$ is the unique minimizer of $g$ and $g^*=g(1,-1)=0.$

We compare the convergence behaviour of the generated trajectories of \eqref{dysy} by taking into account the following instances.

 \begin{center}
    \begin{tabular}{|l|c|r|l|}
      $\alpha$ & $\beta$ & $\gamma$ & \text{Color}\\ 
      \hline
      3   & 0   & 0 & \text{blue}\\ 
      3.1 & 1   & 0 & \text{red}\\ 
      3.1 & 0 & 0.5 & \text{yellow}\\ 
      3.1 & 1 & 1 & \text{purple}\\ %
      3.1 & -1 & 1 & \text{green}\\ %
    \end{tabular}
  \end{center}

The result are depicted in Figure.1 for the starting points $u_0=v_0=(-5,30)$ and $u_0=v_0=(5,-30)$, respectively.
\begin{figure}
\begin{subfigure}{.5\textwidth}
  \centering
  \includegraphics[width=.99\linewidth]{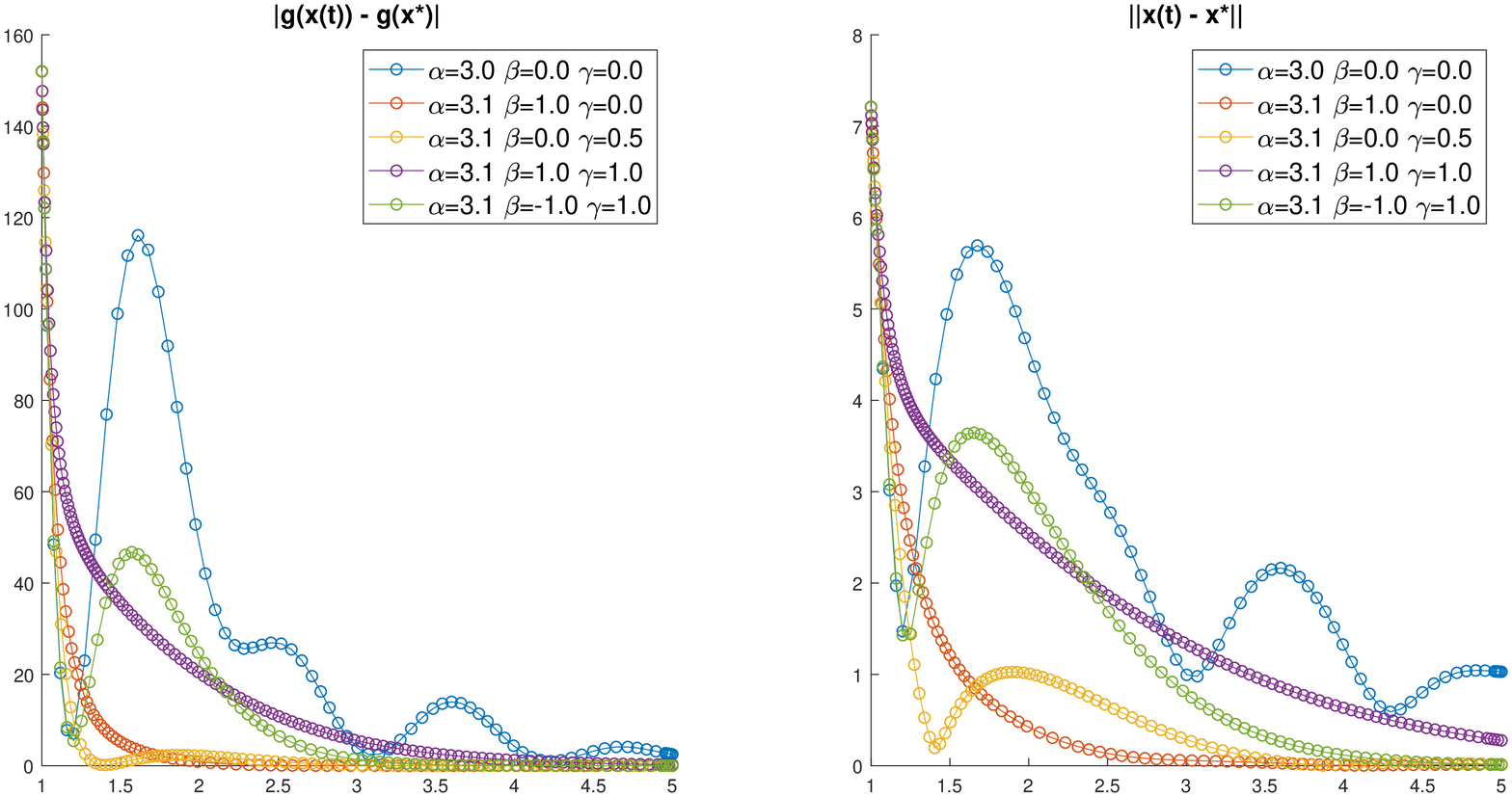}
  \caption{$u_0=v_0=(-5,30).$}
  \label{fig1:sfig11}
\end{subfigure}
\begin{subfigure}{.5\textwidth}
  \centering
  \includegraphics[width=.99\linewidth]{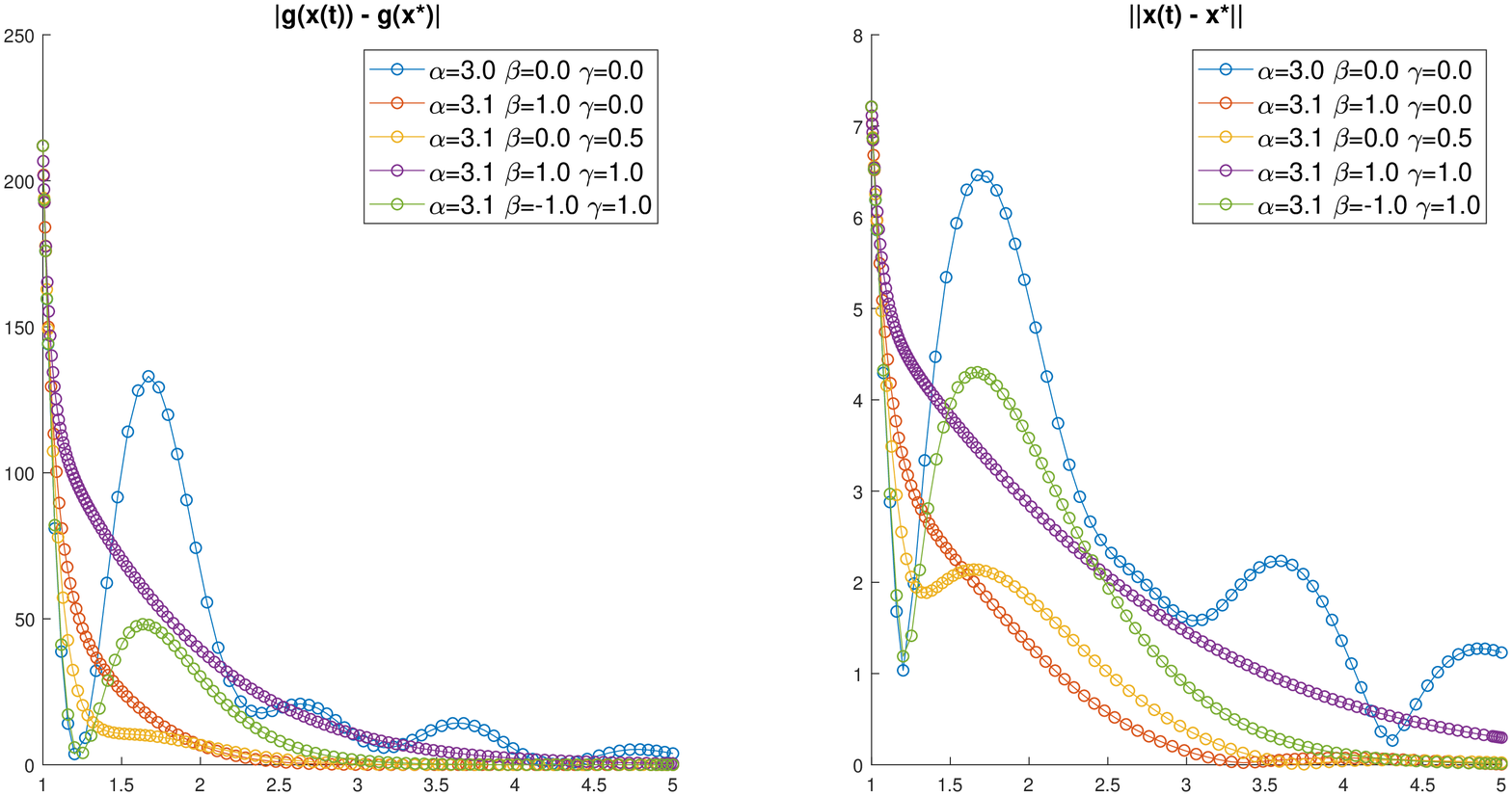}
  \caption{ $u_0=v_0=(5,-30).$}
  \label{fig1:sfig12}
\end{subfigure}
 \caption{Error analysis with different parameters in dynamical system (4) for a strongly convex objective function.}
\end{figure}

{\bf 2.} In the next experiment we consider the convex, but not strongly convex function $g:\R^2\To\R,$
$$g(x,y)=x^4 + 5y^2 -4x-10y+8.$$
Then,  $\n g(x,y)=(4x^3-4,10y-10)$, further $x^*=(1,1)$ is the unique minimizer of $g$ and $g^*=g(1,1)=0.$

We compare the convergence behaviour of the generated trajectories of \eqref{dysy} by taking into account the following instances.

 \begin{center}
    \begin{tabular}{|l|c|r|l|}
      $\alpha$ & $\beta$ & $\gamma$ & \text{Color}\\ 
      \hline
      3.1   & 0   & 0 & \text{blue}\\ 
      3.1 & 2   & 0 & \text{red}\\ 
      3.1 & 0 & 1 & \text{yellow}\\ 
      3.1 & 0.5 & 0.5 & \text{purple}\\ %
      3.1 & -0.5 & 1 & \text{green}\\ %
    \end{tabular}
  \end{center}

The result are depicted in Figure.2 for the starting points $u_0=v_0=(-1,5)$ and $u_0=v_0=(2,-2)$, respectively.

\begin{figure}
\begin{subfigure}{.5\textwidth}
  \centering
  \includegraphics[width=.99\linewidth]{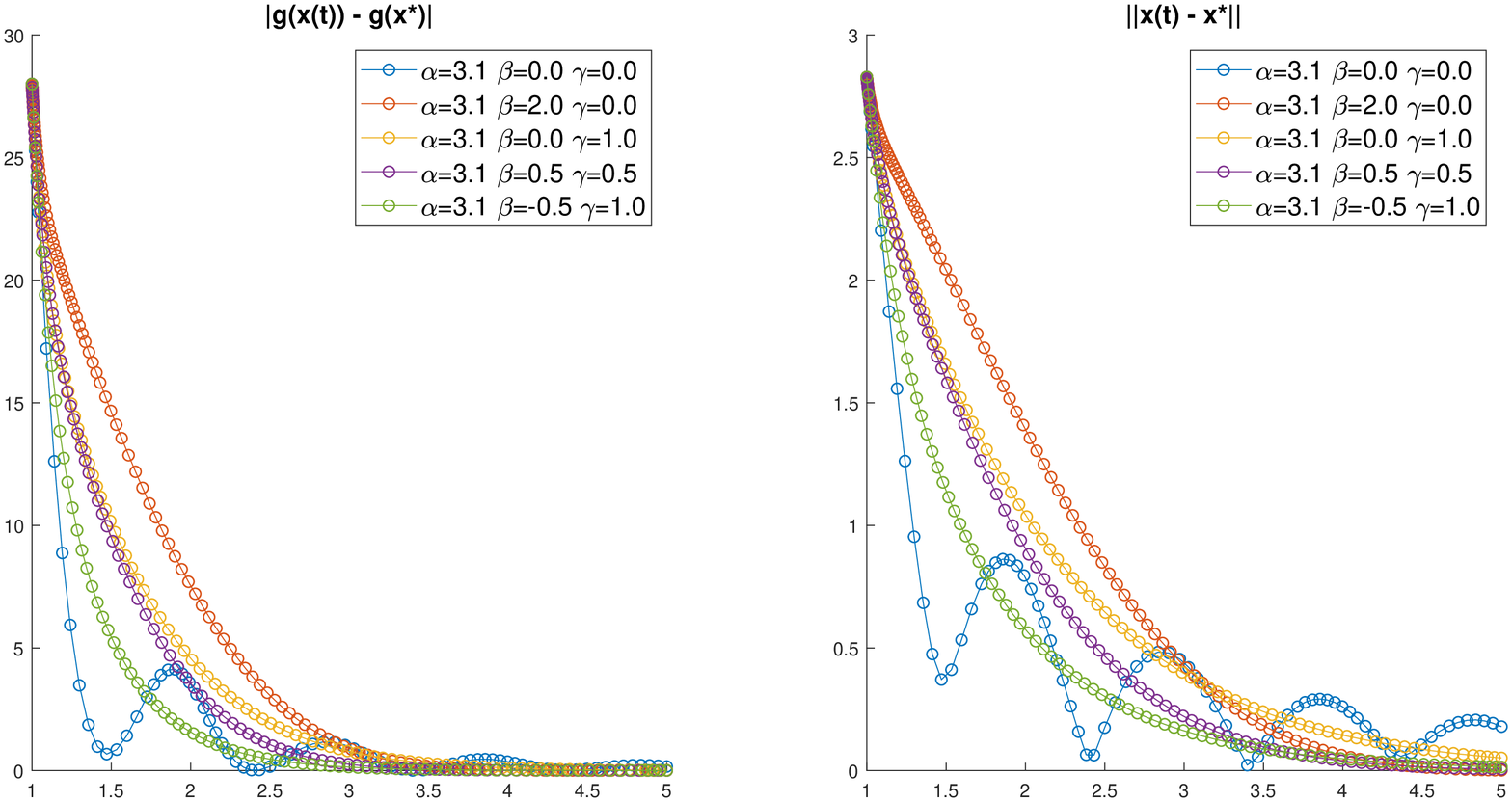}
  \caption{$u_0=v_0=(-1,5).$}
  \label{fig2:sfig21}
\end{subfigure}
\begin{subfigure}{.5\textwidth}
  \centering
  \includegraphics[width=.99\linewidth]{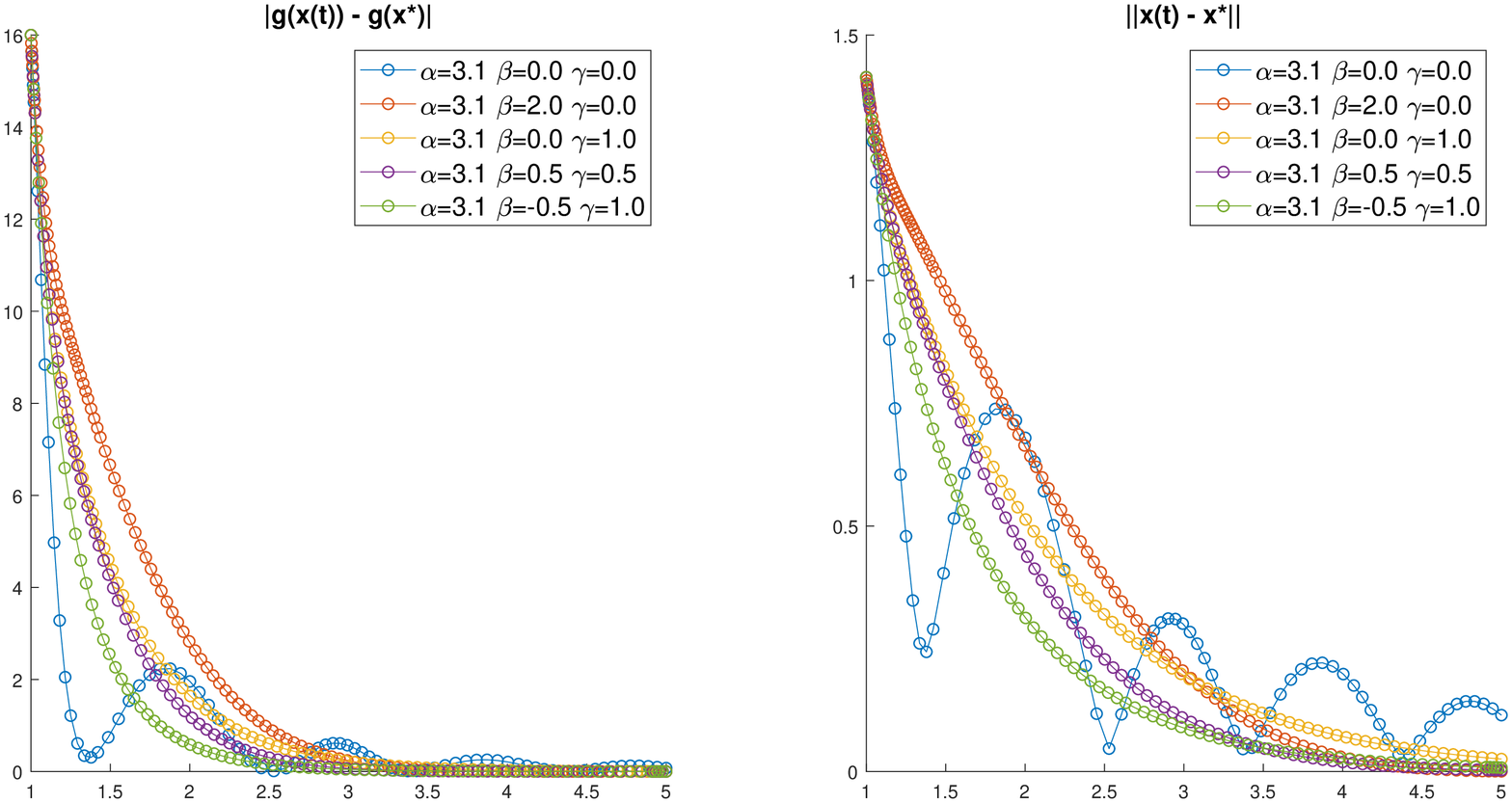}
  \caption{ $u_0=v_0=(2,-2).$}
  \label{fig2:sfig22}
\end{subfigure}
 \caption{Error analysis with different parameters in dynamical system (4) for a convex, but not strongly convex, objective function.}
\end{figure}

\begin{remark} Observe that in all cases the trajectories generated by dynamical system \eqref{dysy} have a better behaviour than the trajectories generated by the heavy ball system with vanishing damping \eqref{ee11}. As we have emphasized before, it seems that the perturbation $\left(\g+\frac{\b}{t}\right)\dot{x}(t)$ in the argument of the gradient of the objective function has a smoothing effect. The choice of the parameters $\g$ and $\b$ will be validated by the theoretical results from Section 3, where we show that in case $\a>3,\,\g>0,\,\b\in\R$ and also in the case $\a>3,\,\g=0,\,\b\ge0$ the energy error $g(x(t)-\min g$  is of order $\mathcal{O}\left(\frac{1}{t^2}\right)$ just as the case of heavy ball method. Further, for the values $\a>3,\,\g>0,\,\b\in\R$ and for $\a>0,\,\g=0,\,\b\ge0$  we are able to show that a generated trajectory $x(t)$ converges to a minimum of the objective function $g.$
\end{remark}

\subsection{The organization of the paper}

The outline of the paper is the following. In the next section we show the existence and uniqueness of the trajectories generated by the system \eqref{dysy}. Further we show that the third order derivative exists almost for every $t\ge t_0$ and we  give some estimates of the third order derivative in terms of velocity and acceleration. We do not assume the convexity of the objective function $g$ in these results. However, it seems that the assumption that $g$ has Lipschitz continuous gradient is essential in obtaining existence and uniqueness of the generated trajectories of the dynamical system \eqref{dysy}. In Section 3 we deal with the convergence analysis of the generated trajectories. We introduce a general energy functional, which will play the role of a Lyapunov function associated to the dynamical system \eqref{dysy}. We show convergence of the generated trajectories and also a rate of order $\mathcal{O}(1/t^2)$ for the decay $g\left(x(t)+\left(\g+\frac{\b}{t}\right)\dot{x}(t)\right)-\min g.$ Further, we show that also the error $g(x(t))-\min g$ has a rate of order $\mathcal{O}(1/t^2)$. Finally, we show the convergence of the generated trajectories to a minimum of the objective function $g.$ In section 4 we conclude our paper and we present some possible related future researches.

\section{Existence and uniqueness}
\subsection{On strong global solutions}
The first step toward our existence and uniqueness result obtained in the present section concerns the definition of a strong global solution of the dynamical system \eqref{dysy}.

\begin{definition}
 We call the function $x:[t_0, + \infty) \to \R^m$ a strong global solution of the dynamical system (\ref{dysy}) if  satisfies the following properties:
\begin{align*}
& (i) \quad x, \dot{x} : [t_0, + \infty) \to \R^m \text{ are locally absolutely continuous}; \\
& (ii) \quad \ddot{x}(t) + \frac{\a}{t} \dot{x}(t) + \nabla g \left( x(t)+ \left(\g+\frac{\b}{t} \right)  \dot{x}(t) \right)=0 \text{ for almost every } t \geq t_0; \\
& (iii) \quad x(t_0) = u_0 \text{ and } \dot{x}(t_0) = v_0.
\end{align*}
\end{definition}
For brevity reasons, we recall that a mapping $x : [t_0, + \infty) \to \R^m$ is called locally absolutely continuous if it is absolutely continuous on every compact interval $[t_0, T]$, where $T > t_0$. Further, we have the following equivalent characterizations for an absolutely continuous function $x:[t_0,T]\To\R^m$, (see, for instance, \cite{att-sv2011, abbas-att-sv}):
\begin{itemize}
\item[(a)] there exists $y : [t_0, T] \to \R^m$ an integrable function, such that
$$ x(t) = x(t_0) + \int\limits_{t_0}^{t} y(t) ds, \forall t \in [t_0, T];$$
\item[(b)] $x$ is a continuous function and its distributional derivative is Lebesque integrable on the interval $[t_0, T];$
\item[(c)] for every $\varepsilon > 0$, there exists $\eta > 0$, such that for every finite family $I_k = (a_k, b_k)$ from $[t_0, T]$, the following implication is valid :
\begin{align*}
\left[ I_k \cap I_j = \emptyset \text{ and } \sum\limits_{k} |b_k-a_k| < \eta \right] \Rightarrow \left[ \sum\limits_k \| x(b_k) - x(a_k) \| < \varepsilon \right].
\end{align*}
\end{itemize}


\begin{remark}\label{RemarkDiff}
Let $x : [t_0, +\infty) \to \R^m$ be a locally absolutely continuous function. Then $x$ is differentiable almost everywhere and its derivative coincides with its distributional derivative almost everywhere. On the other hand, we have the equality $\dot{x}(t) = y(t)$ for almost every $t \in [t_0, +\infty)$, where $y = y(t)$ is defined at the integration formula (a).
\end{remark}

The first result of the present section concerns the existence and uniqueness of the trajectories generated by the dynamical system (\ref{dysy}). We prove existence and uniqueness of a strong global solution of \eqref{dysy} by making use of the Cauchy-Lipschitz-Picard Theorem for absolutely continues trajectories (see for example \cite[Proposition 6.2.1]{haraux}, \cite[Theorem 54]{sontag}). The key argument is that one can rewrite \eqref{dysy} as a particular first order dynamical system in a suitably chosen product space (see also \cite{alv-att-bolte-red,att-c-r,BCL-AA,BCL-MM}).

\begin{theorem}\label{exuniq}
Let $(u_0, v_0) \in \R^m \times \R^m$. Then, the dynamical system (\ref{dysy}) admits a unique strong global solution.
\end{theorem}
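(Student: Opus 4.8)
The plan is to recast the second order system \eqref{dysy} as an equivalent first order system in the product space $\R^m\times\R^m$ and then invoke the Cauchy-Lipschitz-Picard theorem for absolutely continuous trajectories, exactly as advertised in the paragraph preceding the statement. Introducing the state variable $X(t)=(x(t),\dot x(t))$ and the field
\begin{equation*}
F:[t_0,+\infty)\times\R^m\times\R^m\To\R^m\times\R^m,\quad F(t,u,v)=\left(v,\,-\frac{\a}{t}v-\nabla g\!\left(u+\left(\g+\frac{\b}{t}\right)v\right)\right),
\end{equation*}
the dynamical system \eqref{dysy} is equivalent to the first order Cauchy problem $\dot X(t)=F(t,X(t))$ with $X(t_0)=(u_0,v_0)$. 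A function $x$ is a strong global solution of \eqref{dysy} precisely when $X=(x,\dot x)$ is a locally absolutely continuous solution of this first order problem, so it suffices to produce a unique such $X$ on $[t_0,+\infty)$.

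To apply the Cauchy-Lipschitz-Picard theorem I would verify three conditions. First, for each fixed $(u,v)$ the map $t\mapsto F(t,u,v)$ is continuous on $[t_0,+\infty)$ (here $t_0>0$ enters, so that $t\mapsto 1/t$ is continuous), hence measurable. Second, $F$ is Lipschitz in the space variable, uniformly on compact time intervals: for $t\in[t_0,T]$ the $L_g$-Lipschitz continuity of $\nabla g$ gives
\begin{align*}
\|F(t,u_1,v_1)-F(t,u_2,v_2)\|
&\le \left(1+\frac{\a}{t}\right)\|v_1-v_2\|+L_g\left\|(u_1-u_2)+\left(\g+\frac{\b}{t}\right)(v_1-v_2)\right\| \\
&\le L_g\|u_1-u_2\|+\left(1+\frac{\a}{t_0}+L_g\Big(|\g|+\frac{|\b|}{t_0}\Big)\right)\|v_1-v_2\|,
\end{align*}
where I used that the coefficients $\a/t$ and $\g+\b/t$ are bounded on $[t_0,T]\subset(0,+\infty)$. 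The decisive feature is that, because $\nabla g$ is \emph{globally} Lipschitz, this bound is uniform in $(u,v)$, not merely local. Third, evaluating the same estimate at $(u_2,v_2)=(0,0)$ yields the linear growth bound $\|F(t,X)\|\le \|\nabla g(0)\|+L(t)\|X\|$ with $L$ bounded on each $[t_0,T]$, which rules out finite time blow up and upgrades the local solution to one defined on all of $[t_0,+\infty)$.

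With these verifications the Cauchy-Lipschitz-Picard theorem (e.g. \cite[Proposition 6.2.1]{haraux}) delivers a unique locally absolutely continuous $X:[t_0,+\infty)\To\R^m\times\R^m$ satisfying $\dot X(t)=F(t,X(t))$ for almost every $t$ together with $X(t_0)=(u_0,v_0)$; writing $X=(x,\dot x)$ recovers the unique strong global solution of \eqref{dysy}, with $x$ and $\dot x$ locally absolutely continuous as the definition demands. I do not expect a genuine obstacle, since the argument is structural: the only points requiring care are the reduction to a first order system in the product space and the observation that the global Lipschitz continuity of $\nabla g$, combined with the boundedness of $\a/t$ and $\g+\b/t$ on compact subintervals of $(0,+\infty)$, simultaneously supplies the uniform Lipschitz and linear growth conditions needed for \emph{global} rather than merely local existence. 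The hypothesis $t_0>0$ is precisely what keeps the damping and perturbation coefficients bounded, and this is the one place where it is used.
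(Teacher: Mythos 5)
Your proposal is correct and takes essentially the same route as the paper: the identical reduction $X=(x,\dot x)$ to the first order system $\dot X(t)=F(t,X(t))$ with the same field $F$, followed by the Cauchy--Lipschitz--Picard theorem for absolutely continuous trajectories after checking that $F(t,\cdot,\cdot)$ is $L(t)$-Lipschitz with $L$ locally integrable (via the global $L_g$-Lipschitz continuity of $\nabla g$ and the boundedness of $\frac{\a}{t}$ and $\g+\frac{\b}{t}$ on $[t_0,T]$, $t_0>0$) and that $t\mapsto F(t,u,v)$ is locally integrable for fixed $(u,v)$. Your explicit linear-growth bound ruling out finite-time blow-up is a nice touch that the paper leaves implicit in the statement of the quoted theorem, but it is a presentational difference rather than a different argument.
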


\begin{proof}
By making use of the notation $X(t)=(x(t),\dot{x}(t))$ the system \eqref{dysy} can be rewritten as a first order dynamical system:
\begin{equation}\label{dysy4}
\left\{
\begin{array}{ll}
\dot{X}(t)=F(t,X(t))\\
X(t_0)=(u_0,v_0),
\end{array}
\right.
\end{equation}
where $F:[t_0,+\infty)\times \R^m\times\R^m\To \R^m\times\R^m,\, F(t,u,v)=\left(v, -\frac{\a}{t}v-\nabla g\left(u+\left(\g+\frac{\b}{t}\right)v\right)\right).$

 The existence and uniqueness of a strong global solution of \eqref{dysy} follows according to the Cauchy-Lipschitz-Picard Theorem applied to the first order dynamical system \eqref{dysy4}. In order to prove the existence and uniqueness of the trajectories generated by \eqref{dysy4} we show the following:

 (I) For every $t\in[t_0,+\infty)$ the mapping $F(t, \cdot, \cdot)$ is $L(t)$-Lipschitz continuous and $L(\cdot)\in L^1_{loc}([t_0,+\infty))$.

 (II) For all $u,v\in {\R^m}$ one has $F(\cdot,u,v)\in L^1_{loc}([t_0,+\infty),{\R^m}\times {\R^m})$ .
\\

Let us prove {(I)}. Let $t \in [t_0, +\infty)$ be fixed and consider the pairs $(u,v)$ and $(\bar{u}, \bar{v})$ from $\R^m \times \R^m$. Using the Lipschitz continuity of $\n g$ and the obvious inequality $\|A+B\|^2\le 2\|A\|^2+2\|B\|^2$ for all $A,B\in\R^m$, we make the following estimations :
\begin{align*}
\|F(t,u,v)-F(t,\ol u,\ol v)\|&=\sqrt{\|v-\ol v\|^2+\left\|\frac{\a}{t}(\ol v-v)+\nabla g\left(\ol u+\left(\g+\frac{\b}{t}\right)\ol v\right)-\nabla g\left(u+\left(\g+\frac{\b}{t}\right)v\right)\right\|^2} \\
& \le\sqrt{\left(1+2\left(\frac{\a}{t}\right)^2\right)\|v-\ol v\|^2+2L_g^2\left\|(u-\ol u)+\left(\g+\frac{\b}{t}\right)(v-\ol v)\right\|^2} \\
 &\le\sqrt{\left(1+2\left(\frac{\a}{t}\right)^2+4L_g^2\left(\g+\frac{\b}{t}\right)^2\right)\|v-\ol v\|^2+4L_g^2\left\|u-\ol u\right\|^2} \\
 &\le \sqrt{1+4L_g^2+2\left(\frac{\a}{t}\right)^2+4L_g^2\left(\g+\frac{\b}{t}\right)^2}\sqrt{\|v-\ol v\|^2+\|u-\ol u\|^2}  \\
&=\sqrt{1+4L_g^2+2\left(\frac{\a}{t}\right)^2+4L_g^2\left(\g+\frac{\b}{t}\right)^2}\,\|(u,v)-(\ol u,\ol v)\|.
\end{align*}

By employing the notation $L(t)= \sqrt{1+4L_g^2+2\left(\frac{\a}{t}\right)^2+4L_g^2\left(\g+\frac{\b}{t}\right)^2}$, we have that
$$\| F(t,u,v) - F(t, \bar{u}, \bar{v}) \| \leq L(t) \cdot \| (u,v) - (\bar{u}, \bar{v}) \|.$$
Obviously the function $t \To L(t)$ is  continuous  on $[t_0, +\infty)$, hence $L(\cdot)$ is integrable on $[t_0,T]$ for all $t_0<T<+\infty$.

For proving {(II)} consider $(u,v) \in \R^m \times \R^m$  a fixed pair of elements and let $T > t_0$. We consider the following estimations:
\begin{align*}
 \int_{t_0}^T\|F(t,u,v)\|dt &=\int_{t_0}^T \sqrt{\|v\|^2+\left\|\frac{\a}{t}v+\nabla g\left(u+\left(\g+\frac{\b}{t}\right)v\right)\right\|^2}dt  \\
&\le\int_{t_0}^T \sqrt{\left(1+2\left(\frac{\a}{t}\right)^2\right)\|v\|^2+4\left\|\nabla g\left(u+\left(\g+\frac{\b}{t}\right)v\right)-\n g(u)\right\|^2+4\|\n g(u)\|^2}dt  \\
&\le\int_{t_0}^T \sqrt{\left(1+2\left(\frac{\a}{t}\right)^2+4L_g^2\left(\g+\frac{\b}{t}\right)^2\right)\|v\|^2+4\|\n g(u)\|^2}dt  \\
&\le \sqrt{\|v\|^2+\|\nabla g(u)\|^2}\int_{t_0}^T \sqrt{5+2\left(\frac{\a}{t}\right)^2+4L_g^2\left(\g+\frac{\b}{t}\right)^2}dt
\end{align*}
and the conclusion follows by the continuity of the function $t \To \sqrt{5+2\left(\frac{\a}{t}\right)^2+4L_g^2\left(\g+\frac{\b}{t}\right)^2}$ on $[t_0,T]$.

The Cauchy-Lipschitz-Picard theorem guarantees existence and uniqueness of the trajectory of the first order dynamical system \eqref{dysy4} and thus of the second order dynamical system \eqref{dysy}.
\end{proof}

\begin{remark} Note that we did not use the convexity assumption imposed on $g$ in the proof of Theorem \ref{exuniq}. However, we emphasize that according to the proof of Theorem \ref{exuniq}, the assumption that $g$ has a Lipschitz continuous gradient is essential in order to obtain existence  and uniqueness of the trajectories generated by the dynamical system \eqref{dysy}.
\end{remark}

\subsection{On the third order derivatives}

In this section we show that the third order derivative of a strong global solution of the dynamical system \eqref{dysy} exists almost everywhere on $[t_0, + \infty)$. Further we give an upper bound estimate for the third order derivative in terms of velocity and acceleration of a strong global solution of \eqref{dysy}. For simplicity, in the proof of the following sequel we employ the $1$-norm on $\R^m \times \R^m$, defined as $\| (x_1, x_2) \|_{1} = \| x_1 \| + \| x_2 \|$, for all $x_1,x_2 \in \R^m$. Obviously one has
$$\frac{1}{\sqrt{2}}\| (x_1, x_2) \|_{1}\le \| (x_1, x_2) \|=\sqrt{\|x_1\|^2+\|x_2\|^2}\le\| (x_1, x_2) \|_{1},\mbox{ for all }x_1,x_2\in\R^m.$$

\begin{proposition}
For the starting points $(u_0,v_0) \in \R^m \times \R^m$ let $x$ be the unique strong global solution of the dynamical system (\ref{dysy}). Then, $\ddot{x}$ is locally absolutely continuous on $[t_0, + \infty)$, consequently the third order derivative $x^{(3)}$ exists almost everywhere on $[t_0, + \infty)$.
\end{proposition}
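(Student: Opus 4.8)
The plan is to read the governing equation as an explicit formula for the acceleration and to show that the resulting expression, viewed as a function of $t$, is locally absolutely continuous; local absolute continuity of $\ddot{x}$ then forces the almost everywhere existence of $x^{(3)}$, since a locally absolutely continuous function is differentiable almost everywhere. Concretely, from property (ii) of the strong global solution we have, for almost every $t \geq t_0$,
\begin{equation*}
\ddot{x}(t) = -\frac{\a}{t}\dot{x}(t) - \nabla g\left(x(t)+\left(\g+\frac{\b}{t}\right)\dot{x}(t)\right) =: G(t),
\end{equation*}
so it suffices to prove that $G$ is locally absolutely continuous on $[t_0, +\infty)$, i.e.\ absolutely continuous on every compact interval $[t_0, T]$ with $T > t_0$.

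First I would record two elementary stability properties of absolute continuity on a compact interval $[t_0, T]$: \textbf{(A)} the product of two absolutely continuous functions is absolutely continuous, both factors being bounded on the compact interval (use the splitting $f(b_k)g(b_k)-f(a_k)g(a_k)=f(b_k)(g(b_k)-g(a_k))+g(a_k)(f(b_k)-f(a_k))$); and \textbf{(B)} if $f:\R^m\to\R^m$ is $L$-Lipschitz and $h:[t_0,T]\to\R^m$ is absolutely continuous, then $f\circ h$ is absolutely continuous. Property \textbf{(B)} follows directly from characterization (c) above: given $\varepsilon>0$, take the modulus $\eta$ associated to $h$ for the tolerance $\varepsilon/L$, so that disjoint intervals of total length less than $\eta$ give $\sum_k\|h(b_k)-h(a_k)\|<\varepsilon/L$, whence $\sum_k\|f(h(b_k))-f(h(a_k))\|\leq L\sum_k\|h(b_k)-h(a_k)\|<\varepsilon$. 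With these in hand, the first term $-\frac{\a}{t}\dot{x}(t)$ is absolutely continuous on $[t_0,T]$ as the product of the factor $t\mapsto -\a/t$, which is $C^1$ (hence Lipschitz) there provided $t_0>0$, with the absolutely continuous $\dot{x}$. For the second term, set $w(t)=x(t)+(\g+\b/t)\dot{x}(t)$; since $x$ and $\dot{x}$ are locally absolutely continuous and $t\mapsto\g+\b/t$ is $C^1$ on $[t_0,T]$, property \textbf{(A)} makes $w$ absolutely continuous, and then property \textbf{(B)} with $f=\nabla g$ (globally $L_g$-Lipschitz) makes $\nabla g(w(\cdot))$ absolutely continuous. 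Hence $G$, a sum of absolutely continuous functions, is absolutely continuous on $[t_0,T]$.

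To finish, I would promote the almost everywhere identity $\ddot{x}=G$ to an everywhere statement. Since $\dot{x}$ is locally absolutely continuous, Remark \ref{RemarkDiff} gives $\dot{x}(t)=v_0+\int_{t_0}^t\ddot{x}(s)\,ds=v_0+\int_{t_0}^t G(s)\,ds$, and because $G$ is continuous (being absolutely continuous) the fundamental theorem of calculus shows $\dot{x}$ is everywhere differentiable with $\ddot{x}(t)=G(t)$ for every $t$. Thus $\ddot{x}$ coincides with the locally absolutely continuous function $G$, which is differentiable almost everywhere, yielding the almost everywhere existence of $x^{(3)}$ on $[t_0,+\infty)$. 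The only genuine obstacle is establishing the stability lemmas \textbf{(A)} and \textbf{(B)} at the level of rigor the paper uses — in particular the composition rule \textbf{(B)}, which is precisely where the Lipschitz continuity of $\nabla g$ is indispensable; the rest is bookkeeping, with the mild caveat that one needs $t_0>0$ so that the factor $1/t$ is smooth and bounded on the working interval.
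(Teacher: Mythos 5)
Your proof is correct and is in essence the paper's own argument: the paper likewise reads the equation as an explicit formula for the acceleration (via the first-order reformulation $\dot X(t)=F(t,X(t))$ with $X=(x,\dot x)$) and verifies local absolute continuity of the right-hand side by combining the $L_g$-Lipschitz continuity of $\nabla g$ with the $\varepsilon$--$\eta$ characterization of absolute continuity applied to $x$, $\dot x$ and $t\mapsto 1/t$ on $[t_0,T]$, so your lemmas \textbf{(A)} and \textbf{(B)} merely modularize the same triangle-inequality estimates (the paper's explicit constants $L_1$, $L_2$ play the role of your Lipschitz bounds, using boundedness of $\dot x$ on the compact interval exactly as your product lemma does). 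The one place you go beyond the paper is the explicit promotion of the almost-everywhere identity $\ddot x=G$ to an everywhere identity via the integral representation $\dot x(t)=v_0+\int_{t_0}^t G(s)\,ds$ and the continuity of $G$ --- the paper silently writes $\dot X(s)=F(s,X(s))$ for all $s$, which strictly holds only almost everywhere until so promoted, so this extra step is a welcome patch rather than a deviation.
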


\begin{proof}
We  show that $\dot{X}(t) = (\dot{x}(t), \ddot{x}(t))$ is locally absolutely continuous, hence $\ddot{x}$ is also locally absolutely continuous. This implies by Remark \ref{RemarkDiff} that $x^{(3)}$ exists almost everywhere on $[t_0, + \infty)$.

Let $T > t_0$ and $s,t \in [t_0, T]$. We consider the following chain of inequalities :
\begin{align*}
\| \dot{X}(s) - \dot{X}(t) \|_{1} &= \| F(s,X(s)) - F(t,X(t)) \|_{1} \\
&= \left\| \left( \dot{x}(s) - \dot{x}(t) , \frac{\a}{t} \dot{x}(t)-\frac{\a}{s} \dot{x}(t) +\n g\left( x(t)+ \left(\g+\frac{\b}{t}\right) \dot{x}(t) \right) - \n g\left( x(s) + \left(\g+\frac{\b}{s}\right) \dot{x}(s) \right)\right) \right\|_{1}\\
&\le \| \dot{x}(s) - \dot{x}(t) \| + \left\| \frac{\a}{s} \dot{x}(s) - \frac{\a}{t} \dot{x}(t) \right\| \\
&+\left\| \n g\left( x(s) + \left(\g+\frac{\b}{s}\right) \dot{x}(s) \right) - \n g\left( x(t) + \left(\g+\frac{\b}{t}\right) \dot{x}(t) \right)  \right\|
\end{align*}
and by using the $L_g$-Lipschitz continuity of $\n g$ we obtain
\begin{align*}
\| \dot{X}(s) - \dot{X}(t) \|_{1}&\leq \| \dot{x}(s) - \dot{x}(t) \| + \left\| \frac{\a}{s} \dot{x}(s) - \frac{\a}{t} \dot{x}(t) \right\| + L_g \| x(s) - x(t)\| +L_g\left\|\left(\g+\frac{\b}{s}\right) \dot{x}(s) - \left(\g+\frac{\b}{t}\right) \dot{x}(t)\right\| \\
&\leq \| \dot{x}(s) - \dot{x}(t) \| + \dfrac{\alpha}{s} \| \dot{x}(s) - \dot{x}(t) \| + \left| \dfrac{\alpha}{s} - \frac{\a}{t} \right| \| \dot{x}(t) \|+ L_g \| x(s) - x(t) \| \\
&  + L_g  \left(\g+\frac{\b}{s}\right)\| \dot{x}(s) - \dot{x}(t) \| + L_g \left| \frac{\b}{s}-\frac{\b}{t}\right| \cdot \| \dot{x}(t) \| \\
&= \left( 1 + \dfrac{\alpha}{s} + L_g \left(\g+\frac{\b}{s}\right)\right) \| \dot{x}(s) -\dot{x}(t) \| + L_g \| x(s) - x(t) \| + \left( \a + L_g |\b| \right) \cdot \left| \dfrac{1}{s} - \dfrac{1}{t} \right|  \| \dot{x}(t) \|.
\end{align*}
Further, let us introduce the following additional notations:
$$L_1 := \max\limits_{s \in [t_0, T]}\left( 1 + \dfrac{\alpha}{s} + L_g \left(\g+\frac{\b}{s}\right)\right)= 1 + \dfrac{\alpha}{t_0} + L_g \left(\g+\frac{\b}{t_0}\right)\mbox{ and }L_2 :=  \left( \a + L_g |\b| \right) \max\limits_{t \in [t_0,T]} \| \dot{x}(t) \|.$$
Then, one has
\begin{align*}
\| \dot{X}(s) - \dot{X}(t) \|\le \| \dot{X}(s) - \dot{X}(t) \|_{1} \leq L_1 \| \dot{x}(s) - \dot{x}(t) \| + L_g \| x(s) - x(t) \| +  L_2  \left| \dfrac{1}{s} - \dfrac{1}{t} \right|.
\end{align*}
By the fact that $x$ is the strong global solution for the dynamical system (\ref{dysy}), it follows that $x$ and $\dot{x}$ are  absolutely continuous on the interval $[t_0,T]$. Moreover, the function $t \To \dfrac{1}{t}$ belongs to $C^1([t_0, T], \mathbb{R})$, hence it is also absolutely continuous on the interval $[t_0,T]$. Let $\varepsilon > 0$.  Then, there exists $\eta > 0$, such that for $I_k = (a_k,b_k) \subseteq [t_0,T]$ satisfying $I_k \cap I_j = \emptyset$ and $\sum\limits_{k} |b_k-a_k| < \eta$, we have that

$$ \sum\limits_{k} \| \dot{x}(b_k) - \dot{x}(a_k) \| < \dfrac{\varepsilon}{3L_1}, \, \sum\limits_{k} \| x(b_k) - x(a_k) \| < \dfrac{\varepsilon}{3L_g}\mbox{ and } \sum\limits_{k} \Big| \dfrac{1}{b_k} - \dfrac{1}{a_k} \Big| < \dfrac{\varepsilon}{3 L_2}.$$
Summing all up, we obtain
\begin{align*}
& \sum\limits_{k} \| \dot{X}(b_k) - \dot{X}(a_k) \| \leq L_1 \sum\limits_{k} \| \dot{x}(b_k) - \dot{x}(a_k) \| + L_g \sum\limits_{k} \| x(b_k) - x(a_k) \| + L_2  \sum\limits_{k} \left| \dfrac{1}{b_k} - \dfrac{1}{a_k} \right| < \varepsilon,
\end{align*}
consequently $\dot{X}$ is absolutely continuous on $[t_0,T]$. and the conclusion follows.
\end{proof}

Concerning an upper bound estimate of the third order derivative $x^{(3)}$ the following result holds.
\begin{lemma}
For the initial values $(u_0,v_0) \in \R^m \times \R^m$ consider $x$ the unique strong global solution of the second-order dynamical system (\ref{dysy}). Then, there  exists $K> 0$ such that for almost every $t \in [t_0, +\infty)$, we have that :
\begin{equation}\label{thirdderiv}
 \| x^{(3)}(t) \| \leq K( \| \dot{x}(t) \| +  \| \ddot{x}(t) \|).
 \end{equation}
\end{lemma}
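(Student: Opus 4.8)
The plan is to differentiate the governing equation of \eqref{dysy} almost everywhere and to bound each resulting term. Writing $y(t) = x(t) + \left(\g + \frac{\b}{t}\right)\dot{x}(t)$ for the argument of the gradient, the system reads $\ddot{x}(t) = -\frac{\a}{t}\dot{x}(t) - \n g(y(t))$. Since the preceding proposition guarantees that $\ddot{x}$ is locally absolutely continuous, I may differentiate this identity for almost every $t \geq t_0$, obtaining
$$x^{(3)}(t) = \frac{\a}{t^2}\dot{x}(t) - \frac{\a}{t}\ddot{x}(t) - \frac{d}{dt}\n g(y(t)).$$

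The delicate point is the last term. Since we only assume that $g$ has an $L_g$-Lipschitz continuous gradient, $\n g$ need not be differentiable, so the chain rule producing a Hessian is unavailable. I would instead invoke the elementary fact that the composition of an $L_g$-Lipschitz map with a locally absolutely continuous map is again locally absolutely continuous and satisfies $\left\|\frac{d}{dt}\n g(y(t))\right\| \leq L_g\,\|\dot{y}(t)\|$ for almost every $t$; this follows directly from $\|\n g(y(s)) - \n g(y(t))\| \leq L_g\|y(s) - y(t)\|$ upon forming the difference quotient and passing to the limit at points where both derivatives exist. Verifying that $y$ is locally absolutely continuous (a sum and product of the locally absolutely continuous functions $x$, $\dot x$ and the $C^1$ factor $\g+\frac{\b}{t}$) is routine. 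This is the main obstacle, and it is exactly where the Lipschitz assumption on the gradient enters.

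It then remains to compute $\dot{y}$ and collect the estimates. Differentiating $y$ gives
$$\dot{y}(t) = \left(1 - \frac{\b}{t^2}\right)\dot{x}(t) + \left(\g + \frac{\b}{t}\right)\ddot{x}(t),$$
so that, combining with the bound on $\frac{d}{dt}\n g(y(t))$,
$$\|x^{(3)}(t)\| \leq \left(\frac{\a}{t^2} + L_g\left|1 - \frac{\b}{t^2}\right|\right)\|\dot{x}(t)\| + \left(\frac{\a}{t} + L_g\left|\g + \frac{\b}{t}\right|\right)\|\ddot{x}(t)\|.$$

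Finally, since $t \geq t_0 > 0$, each coefficient $\frac{\a}{t^2}$, $\frac{\a}{t}$, $\left|1 - \frac{\b}{t^2}\right|$ and $\left|\g + \frac{\b}{t}\right|$ is bounded on $[t_0, +\infty)$, respectively by $\frac{\a}{t_0^2}$, $\frac{\a}{t_0}$, $1 + \frac{|\b|}{t_0^2}$ and $\g + \frac{|\b|}{t_0}$. Taking
$$K = \max\left\{\frac{\a}{t_0^2} + L_g\left(1 + \frac{|\b|}{t_0^2}\right), \ \frac{\a}{t_0} + L_g\left(\g + \frac{|\b|}{t_0}\right)\right\}$$
yields the desired inequality \eqref{thirdderiv}. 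Beyond the almost-everywhere differentiation of the composition discussed above, I expect no further difficulty, and in particular the convexity of $g$ is not needed.
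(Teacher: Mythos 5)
Your proof is correct and takes essentially the same route as the paper: the paper also argues by difference quotients, estimating $\|\dot X(t+h)-\dot X(t)\|_1$ for $X=(x,\dot x)$ via the $L_g$-Lipschitz continuity of $\nabla g$ and letting $h\to 0$, which is precisely your almost-everywhere bound $\left\|\frac{d}{dt}\nabla g(y(t))\right\|\le L_g\|\dot y(t)\|$ made at the level of the full vector field. Your term-by-term coefficients and constant $K$ agree with the paper's (note $\frac{\a}{t^2}+L_g\left|1-\frac{\b}{t^2}\right|\le L_g+\frac{\a+L_g|\b|}{t^2}$), your explicit justification that a Lipschitz map composed with a locally absolutely continuous map is again locally absolutely continuous being a welcome presentational refinement rather than a different idea.
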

\begin{proof}
For  $h > 0 $ we consider the following inequalities :
\begin{align*}
 \| \dot{X}(t+h) - X(t) \|_{1}&= \| F(t+h, X(t+h)) - F(t,X(t)) \|_{1} \\
&\leq \left\| \left( \dot{x}(t+h) - \dot{x}(t) \right) \right\| + \alpha \left\| \dfrac{1}{t+h} \dot{x}(t+h) - \dfrac{1}{t} \dot{x}(t) \right\| \\
& + \left\| \nabla g \left( x(t+h) + \left(\g+\dfrac{\b}{t+h}\right) \dot{x}(t+h) \right) - \nabla g \left( x(t) +\left(\g+ \dfrac{\b}{t}\right) \dot{x}(t) \right) \right\| \\
&\leq \left\| \left( \dot{x}(t+h) - \dot{x}(t) \right) \right\| + \alpha \left\| \dfrac{1}{t+h} \dot{x}(t+h) - \dfrac{1}{t} \dot{x}(t) \right\| \\
&+ L_g \| x(t+h)-x(t)\| +L_g\left\|\left(\g+\dfrac{\b}{t+h}\right) \dot{x}(t+h)  - \left(\g+\dfrac{\b}{t}\right) \dot{x}(t) \right\|.
\end{align*}
Now, dividing by $h > 0$ and taking the limit ${h \To 0}$, it follows that
\begin{align*}
\| \ddot{X}(t) \|_{1} \leq \| \ddot{x}(t) \| + \alpha \left\| \left( \dfrac{1}{t} \dot{x}(t) \right)^{\prime} \right\| + L_g \| \dot{x}(t) \| + L_g \left\| \left( \left(\g+\dfrac{\b}{t}\right) \dot{x}(t) \right)^{\prime} \right\|.
\end{align*}
Consequently,
\begin{align*}
\| x^{(3)}(t) \| \leq \alpha \left\| -\dfrac{1}{t^2} \dot{x}(t) + \dfrac{1}{t} \ddot{x}(t) \right\| + L_g \| \dot{x}(t) \| + L_g \left\| -\dfrac{\b}{t^2} \dot{x}(t) + \left( \g + \dfrac{ \b}{t} \right) \ddot{x}(t) \right\|.
\end{align*}
Finally,
$$ \| x^{(3)}(t) \| \leq \left( L_g + \dfrac{\a+L_g|\b|}{t^2}\right) \| \dot{x}(t) \| + \left( \frac{\a}{t} + L_g \left|\g + \dfrac{\b}{t} \right| \right) \| \ddot{x}(t) \|\le K(\| \dot{x}(t) \|+\| \ddot{x}(t) \|),$$
where $K=\max\left\{\max_{t\ge t_0}\left( L_g + \dfrac{\a+L_g|\b|}{t^2}\right),\max_{t\ge t_0}\left( \frac{\a}{t} + L_g \left|\g + \dfrac{\b}{t} \right|\right)\right\}$.
\end{proof}

\section{Convergence analysis}

\subsection{On a general energy functional associated to the dynamical system \eqref{dysy}}
In order to obtain convergence rates for the function values in the trajectories generated by the dynamical system \eqref{dysy}, we need to introduce an appropriate energy functional which will play the role of a Lyapunov function. The form of such an energy  functional associated to heavy ball system with vanishing damping and its extensions is well known in the literature, see for instance \cite{att-c-p-r-math-pr2018}, \cite{att-p-r-jde2016}, \cite{AD} and \cite{ADR}. However, the novelty of the dynamical system \eqref{dysy}, compared with the extended/perturbed variants of the heavy ball system studied in the above mentioned papers, consists in the fact that in system \eqref{dysy} the perturbation is carried out in the argument of the gradient of the objective function. This seems to be a new approach in the literature, therefore the previously mentioned energy functionals are not suitable for a valuable convergence analysis of the dynamical system \eqref{dysy}.
Hence, let us denote  $\a(t)=\frac{\a}{t}$ and $\b(t)=\g+\frac{\b}{t}$, and assume  that $\argmin g\neq\emptyset.$ Further, let $g^*=\min g=g(x^*),\,x^*\in\argmin g.$
In connection to the dynamical system \eqref{dysy}, we  introduce the general energy  functional
\begin{equation}\label{energyfunction}
\mathcal{E}(t)=a(t)(g(x(t)+\b(t)\dot{x}(t))-g^*)+\frac12\|b(t)(x(t)-x^*)+c(t)\dot{x}(t)\|^2+\frac{d(t)}{2}\|x(t)-x^*\|^2,
\end{equation}
which can be seen as an extension of the energy function studied in \cite{att-c-p-r-math-pr2018} in connection to the heavy ball system with vanishing damping.

Our purpose is to define the non-negative functions $a(t),b(t),c(t),d(t)$ such that $\dot{\mathcal{E}}(t)\le 0$, that is, the function $\mathcal{E}(t)$ is non-increasing after a $t_1\ge t_0.$ Indeed, if $\mathcal{E}(t)$ is non-increasing for $t\ge t_1,$ then
$$a(t)(g(x(t)+\b(t)\dot{x}(t))-g^*)\le \mathcal{E}(t)\le \mathcal{E}(t_1),$$
in other words
$$ g(x(t)+\b(t)\dot{x}(t))-g^*\le \frac{\mathcal{E}(t_1)}{a(t)},\mbox{ for all }t\ge t_1.$$

In what follows we derive  the conditions which must be imposed on the positive functions $a(t),b(t),c(t),d(t)$ in order to obtain $\dot{\mathcal{E}}(t)\le 0$ for every $t\ge t_1.$
We have,
$$\dot{\mathcal{E}}(t)=a'(t)(g(x(t)+\b(t)\dot{x}(t))-g^*)+a(t)\<\n g(x(t)+\b(t)\dot{x}(t)),\b(t)\ddot{x}(t)+(\b'(t)+1)\dot{x}(t)\>+$$
$$\<b'(t)(x(t)-x^*)+(b(t)+c'(t))\dot{x}(t)+c(t)\ddot{x}(t),b(t)(x(t)-x^*)+c(t)\dot{x}(t)\>+$$
$$\frac{d'(t)}{2}\|x(t)-x^*\|^2+d(t)\<\dot{x}(t),x(t)-x^*\>.$$

Now, from \eqref{dysy} we get
$$\ddot{x}(t)=-\a(t)\dot{x}(t)-\n g(x(t)+\b(t)\dot{x}(t)),$$
hence
$$\dot{\mathcal{E}}(t)=a'(t)(g(x(t)+\b(t)\dot{x}(t))-g^*)+$$
$$a(t)\left \<\n g(x(t)+\b(t)\dot{x}(t)),-\b(t)\n g(x(t)+\b(t)\dot{x}(t))+\left(-\b(t)\a(t)+\b'(t)+1\right)\dot{x}(t)\right\>+$$
$$\left\<b'(t)(x(t)-x^*)+\left(b(t)+c'(t)-c(t)\a(t)\right)\dot{x}(t)-c(t)\n g(x(t)+\b(t)\dot{x}(t)),b(t)(x(t)-x^*)+c(t)\dot{x}(t)\right\>+$$
$$\frac{d'(t)}{2}\|x(t)-x^*\|^2+d(t)\<\dot{x}(t),x(t)-x^*\>.$$
Consequently,
\begin{equation}\label{generg1}\dot{\mathcal{E}}(t)=a'(t)(g(x(t)+\b(t)\dot{x}(t))-g^*)+
\end{equation}
$$-a(t)\b(t)\|\n g(x(t)+\b(t)\dot{x}(t))\|^2+\left(-a(t)\a(t)\b(t)+a(t)\b'(t)+a(t)-c^2(t)\right)\<\n g(x(t)+\b(t)\dot{x}(t)),\dot{x}(t)\>+$$
$$\left(b'(t)b(t)+\frac{d'(t)}{2}\right)\|x(t)-x^*\|^2+\left(b^2(t)+b(t)c'(t)+b'(t)c(t)-b(t)c(t)\a(t)+d(t)\right)\<\dot{x}(t),x(t)-x^*\>+$$
$$c(t)\left(b(t)+c'(t)-c(t)\a(t)\right)\|\dot{x}(t)\|^2-b(t)c(t)\<\n g(x(t)+\b(t)\dot{x}(t)),x(t)-x^*\>.$$

But
$$\<\n g(x(t)+\b(t)\dot{x}(t)),x(t)-x^*\>=\<\n g(x(t)+\b(t)\dot{x}(t)),x(t)+\b(t)\dot{x}(t)-x^*\>-\<\n g(x(t)+\b(t)\dot{x}(t)),\b(t)\dot{x}(t)\>$$
and by the convexity of $g$ we have
$$\<\n g(x(t)+\b(t)\dot{x}(t)),x(t)+\b(t)\dot{x}(t)-x^*\>\ge g(x(t)+\b(t)\dot{x}(t))-g^*,$$
hence
$$-b(t)c(t)\<\n g(x(t)+\b(t)\dot{x}(t)),x(t)-x^*\>\le$$
$$ -b(t)c(t)( g(x(t)+\b(t)\dot{x}(t))-g^*)+b(t)c(t)\b(t)\<\n g(x(t)+\b(t)\dot{x}(t)),\dot{x}(t)\>.$$

Therefore, \eqref{generg1} becomes
\begin{equation}\label{generg2}
\dot{\mathcal{E}}(t)\le (a'(t)-b(t)c(t))(g(x(t)+\b(t)\dot{x}(t))-g^*)-a(t)\b(t)\|\n g(x(t)+\b(t)\dot{x}(t))\|^2+
\end{equation}
$$\left(-a(t)\a(t)\b(t)+a(t)\b'(t)+a(t)-c^2(t)+b(t)c(t)\b(t)\right)\<\n g(x(t)+\b(t)\dot{x}(t)),\dot{x}(t)\>+$$
$$\left(b'(t)b(t)+\frac{d'(t)}{2}\right)\|x(t)-x^*\|^2+\left(b^2(t)+b(t)c'(t)+b'(t)c(t)-b(t)c(t)\a(t)+d(t)\right)\<\dot{x}(t),x(t)-x^*\>+$$
$$c(t)\left(b(t)+c'(t)-c(t)\a(t)\right)\|\dot{x}(t)\|^2$$

In order to have $\dot{\mathcal{E}}(t)\le 0$ for all $t\ge t_1,\,t_1\ge t_0,$ one must assume that for all $t\ge t_1$ the following inequalities hold:
\begin{align}\label{cond1}
a'(t)-b(t)c(t)\le 0,&\\ \label{cond2}
-a(t)\b(t)\le 0,&\\ \label{cond3}
-a(t)\a(t)\b(t)+a(t)\b'(t)+a(t)-c^2(t)+b(t)c(t)\b(t)= 0,&\\ \label{cond4}
b'(t)b(t)+\frac{d'(t)}{2}\le 0,&\\ \label{cond5}
b'(t)c(t)+b(t)\left(b(t)+c'(t)-c(t)\a(t)\right)+d(t)= 0,&\\ \label{cond6}
c(t)\left(b(t)+c'(t)-c(t)\a(t)\right)\le 0.&
\end{align}

\begin{remark}\label{conditions} Observe that \eqref{cond2} implies that $\b(t)\ge 0$ for all $t\ge t_1,\,t_1\ge t_0$ and  this shows that in dynamical system \eqref{dysy}, one must have $\b\ge0$ whenever $\g=0.$ Further, \eqref{cond4} is satisfied whenever $b(t)$ and $d(t)$ are constant functions. It is obvious that there exists $t_1$ such that for all $t\ge t_1$ we have  $-\a(t)\b(t)+\b'(t)+1=1-\frac{\a\g}{t}-\frac{\a\b+\b}{t^2}>0,$ hence from \eqref{cond3} we get
$$a(t)=\frac{c^2(t)-b(t)c(t)\b(t)}{-\a(t)\b(t)+\b'(t)+1}.$$

Since \eqref{cond5} and \eqref{cond6} do not depend by $\b(t),$ it seem natural to choose $c(t)$, $b(t)$ and $d(t)$ the same as in case of heavy ball system with vanishing damping (see \cite{att-c-p-r-math-pr2018}), that is,
$c(t)=t,\, b(t)=b\in(0,\a-1]$ and $d(t)=b(\a-1-b),$ for all $t\ge t_0,$ provided $\a>1.$ Now, an easy computation shows that in this case
$$a(t)=t^2+\g(\a-b)t+\b+(\b+\a\g^2)(\a-b)+$$
$$\frac{(\a\b\g+(\g\b+2\a\b\g+\a^2\g^3)(\a-b))t+(\a\b+\b)(\b+(\b+\a\g^2)(\a-b))}{t^2-\a\g t-(\a\b+\b)},$$
hence \eqref{cond1} is satisfied whenever $b>2,$ which implies that $\a>3.$

However, if $\g=0$ then
$$a(t)=t^2+\b+\b(\a-b)+\frac{((\a\b+\b)(\b+\b(\a-b))}{t^2-(\a\b+\b)},$$
hence \eqref{cond1} holds also for $b=2$ and $\a=3.$
\end{remark}

\subsection{Error estimates for  the values}
In this section we obtain convergence rate of order $\mathcal{O}(1/t^2),\,t\To+\infty$ for the difference $g\left(x(t)+\left(\g+\frac{\b}{t}\right)\dot{x}(t)\right)-g^*$ where $g^*=g(x^*)=\min g,\,x^*\in\argmin g\neq\emptyset.$ From here we are able to show that $g(x(t))-g^*$ also has a convergence rate of order $\mathcal{O}(1/t^2),\,t\To+\infty.$
However, just as in the case of heavy ball system with vanishing damping, in order to obtain these rates, it is necessary to assume $\a\ge 3$ in our system \eqref{dysy}.
We have the following result.

\begin{theorem}\label{convexconvergence}
Let $x$ be the unique strong global solution of \eqref{dysy} and assume that $\argmin g\neq\emptyset.$
Assume further that $\a> 3$, $\g>0$ and $\b\in\R$.

Then, there exists $K\ge 0$ and $t_1\ge t_0$ such that
\begin{equation}\label{quadrate}
g\left(x(t)+\left(\g+\frac{\b}{t}\right)\dot{x}(t)\right)-\min g\le \frac{K}{t^2},\,\mbox{for all }t\ge t_1.
\end{equation}
Further,
\begin{equation}\label{ergrate}
\int_{t_0}^{+\infty}t\left(g\left(x(t)+\left(\g+\frac{\b}{t}\right)\dot{x}(t)\right)-\min g\right)dt<+\infty,
\end{equation}
and
\begin{equation}\label{ergrategradient}
\int_{t_0}^{+\infty}t^2\left\|\n g\left(x(t)+\left(\g+\frac{\b}{t}\right)\dot{x}(t)\right)\right\|^2dt<+\infty.
\end{equation}
\end{theorem}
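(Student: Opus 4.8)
The plan is to specialize the general energy functional $\mathcal{E}(t)$ from \eqref{energyfunction} to the concrete coefficients identified in Remark \ref{conditions} and then to exploit the sign structure of \eqref{generg2}. Concretely, I would set $c(t)=t$, $b(t)=b$ with $b$ chosen in the interval $(2,\a-1]$ (nonempty precisely because $\a>3$), $d(t)=b(\a-1-b)\ge 0$, and let $a(t)$ be defined through \eqref{cond3}, i.e. by the explicit rational expression displayed in Remark \ref{conditions}. With these choices \eqref{cond3} and \eqref{cond5} hold as equalities by construction, \eqref{cond4} is trivial since $b$ and $d$ are constant, and \eqref{cond6} reduces to $b+1-\a\le 0$, which holds by $b\le\a-1$. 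Since $\g>0$ we have $\b(t)=\g+\b/t>0$ and $a(t)>0$ for $t$ large, so \eqref{cond2} holds; and because $b>2$ the leading behaviour $b(t)c(t)-a'(t)=(b-2)t+O(1)$ makes \eqref{cond1} hold for $t$ large. Hence there is $t_1\ge t_0$ such that all of \eqref{cond1}--\eqref{cond6} are valid on $[t_1,+\infty)$, and therefore $\dot{\mathcal{E}}(t)\le 0$ for almost every $t\ge t_1$.

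Then the rate \eqref{quadrate} is immediate: since $\mathcal{E}$ is non-increasing on $[t_1,+\infty)$ and each of its three summands is nonnegative, one has $a(t)(g(x(t)+\b(t)\dot{x}(t))-g^*)\le \mathcal{E}(t)\le\mathcal{E}(t_1)$. From the explicit formula $a(t)=t^2+\g(\a-b)t+O(1)$ one reads off $a(t)/t^2\to 1$, so $t^2/a(t)$ is bounded on $[t_1,+\infty)$ (after possibly enlarging $t_1$), and dividing by $a(t)$ yields \eqref{quadrate} with, say, $K=\mathcal{E}(t_1)\,\sup_{t\ge t_1}\big(t^2/a(t)\big)$.

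For the two integral estimates I would integrate the sharpened inequality \eqref{generg2} rather than merely use monotonicity. After inserting the chosen coefficients, the $\<\n g,\dot{x}\>$-term vanishes by \eqref{cond3}, the $\|x-x^*\|^2$-term vanishes since $b,d$ are constant, and the $\<\dot{x},x-x^*\>$-term vanishes by \eqref{cond5}, leaving (with $\a(t)=\a/t$, $c'(t)=1$)
$$\dot{\mathcal{E}}(t)\le -\big(bt-a'(t)\big)\big(g(x(t)+\b(t)\dot{x}(t))-g^*\big)-a(t)\b(t)\|\n g(x(t)+\b(t)\dot{x}(t))\|^2-t(\a-1-b)\|\dot{x}(t)\|^2,$$
where all three coefficients $bt-a'(t)$, $a(t)\b(t)$ and $t(\a-1-b)$ are nonnegative on $[t_1,+\infty)$. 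Integrating from $t_1$ to $T$ and using $\mathcal{E}(T)\ge 0$ gives
$$\int_{t_1}^{T}\big(bt-a'(t)\big)\big(g(x(t)+\b(t)\dot{x}(t))-g^*\big)\,dt+\int_{t_1}^{T}a(t)\b(t)\|\n g(x(t)+\b(t)\dot{x}(t))\|^2\,dt\le\mathcal{E}(t_1),$$
uniformly in $T$, so both integrals over $[t_1,+\infty)$ are finite.

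Finally I would convert these bounds into \eqref{ergrate} and \eqref{ergrategradient} by comparing coefficients with $t$ and $t^2$. Since $bt-a'(t)=(b-2)t+O(1)$ with $b>2$, there is $t_2\ge t_1$ with $bt-a'(t)\ge\tfrac{b-2}{2}\,t$ for $t\ge t_2$, which forces \eqref{ergrate}; similarly $a(t)\b(t)=\g t^2+O(t)$ with $\g>0$ gives $a(t)\b(t)\ge\tfrac{\g}{2}t^2$ for $t$ large, forcing \eqref{ergrategradient}. In both cases the contribution of the compact interval $[t_0,t_2]$ is finite by continuity of the integrand along the (absolutely continuous) trajectory. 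The one genuinely delicate point is the asymptotic analysis of $a(t)$ and $a'(t)$: one must differentiate the rational expression for $a(t)$ from Remark \ref{conditions} and track the leading and subleading orders to certify both $a(t)\sim t^2$ and $bt-a'(t)\sim(b-2)t$, which is exactly what pins down the threshold $b>2$ (equivalently $\a>3$). Everything else is sign-checking and integration.
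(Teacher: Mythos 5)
Your proposal is correct and follows essentially the same route as the paper's proof: the same energy functional \eqref{energyfunction} with the coefficients of Remark \ref{conditions} (the paper fixes $b(t)=\a-1$ and $d(t)=0$, a special case of your $b\in(2,\a-1]$ with $d=b(\a-1-b)$), the same verification that \eqref{cond1}--\eqref{cond6} hold for $t$ large using $a'(t)-b(t)c(t)=(3-\a)t+\mathcal{O}(1)$ and $\g+\b/t>0$, and the same conversion of the monotonicity of $\mathcal{E}$ into \eqref{quadrate} via $a(t)\sim t^2$. The only cosmetic difference is that you extract \eqref{ergrate} and \eqref{ergrategradient} from one integration of the sharpened inequality for $\dot{\mathcal{E}}$, keeping both negative terms at once, whereas the paper performs the two integrations separately with the bounds $a'(t)-b(t)c(t)\le\frac{(3-\a)t}{2}$ and $-a(t)\left(\g+\frac{\b}{t}\right)\le-\frac{\g}{2}t^2$; this changes nothing of substance.
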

\begin{proof}
 Let $\min g=g^*=g(x^*),\,x^*\in\argmin g.$ Consider the energy functional \eqref{energyfunction} 
 with
 $$b(t)=\a-1>2,\,c(t)=t,\,d(t)=0.$$
 According to Remark \ref{conditions} we have
 $$a(t)=t^2+\g t+2\b+\a\g^2+\frac{(3\a\b\g+\a^2\g^3+\b\g)t+\b(\a+1)(2\b+\a\g^2)}{t^2-\a\g t-\b(\a+1)}$$ and
 the conditions  \eqref{cond3}-\eqref{cond6} are satisfied with equality for every $t\ge t_0$.

Obviously,  if $t$ is big enough on has that $a(t)>0$ and since $\g>0$ it holds that $\g+\frac{\b}{t}>0$ for $t$ big enough, even if $\b<0$. Hence, there exists $t'\ge t_0$ such \eqref{cond2} is satisfied for every $t\ge t'.$

Now, $a'(t)-b(t)c(t)=(3-\a)t+\g+\mathcal{O}\left(\frac{1}{t^2}\right)$ and by taking into account that $\a>3$ we obtain that there exists $t''\ge t_0$ such that \eqref{cond1} holds for every $t\ge t''.$

Let $t'''=\max(t',t'').$ We conclude that, $\dot{\mathcal{E}}(t)\le0 $ for all $t\ge t''',$ hence $\mathcal{E}(t)$ is nonincreasing, i.e.
$$a(t)\left(g\left(x(t)+\left(\g+\frac{\b}{t}\right)\dot{x}(t)\right)-\min g\right)\le \mathcal{E}(t)\le \mathcal{E}(t'''),\mbox{ for all }t\ge t'''.$$

 But, obviously there exists $t_1\ge t'''$ such that $a(t)\ge t^2$ for all $t\ge t_1$ and by denoting $K=\mathcal{E}(t''')$ we obtain

 $$g\left(x(t)+\left(\g+\frac{\b}{t}\right)\dot{x}(t)\right)-\min g\le \frac{K}{t^2},\mbox{ for all }t\ge t_1.$$

 Next we prove \eqref{ergrate}. Since $a'(t)-b(t)c(t)=(3-\a)t+\g+\mathcal{O}\left(\frac{1}{t^2}\right)$ there exist $t_2\ge t_1$ such that
 $$a'(t)-b(t)c(t)\le \frac{(3-\a)t}{2},\mbox{ for all }t\ge t_2.$$

 Hence,
 $$\dot{\mathcal{E}}(t)\le \frac{(3-\a)t}{2} \left(g\left(x(t)+\left(\g+\frac{\b}{t}\right)\dot{x}(t)\right)-\min g\right),\mbox{ for all }t\ge t_2$$
 and by integrating from $t_2$  to $T> t_2$  we get
 $$\int_{t_2}^{T}t\left(g\left(x(t)+\left(\g+\frac{\b}{t}\right)\dot{x}(t)\right)-\min g\right)dt\le\frac{2}{3-\a}(\mathcal{E}(T)-\mathcal{E}(t_2)).$$
Now, by letting $T\To+\infty$ and taking into account that $\mathcal{E}$ is nonincreasing, the conclusion follows.

For proving \eqref{ergrategradient} observe that there exists $t_3\ge t_1$ such that $-a(t)\left(\g+\frac{\b}{t}\right)\le-\frac{\g}{2} t^2$ for all $t\ge t_3.$
Consequently $$\dot{\mathcal{E}}(t)\le -\frac{\g}{2} t^2 \left\|\n g\left(x(t)+\left(\g+\frac{\b}{t}\right)\dot{x}(t)\right)\right\|^2,\mbox{ for all }t\ge t_3.$$

Integrating from $t_3$ to $T>t_3$ and letting $T\To+\infty$ we obtain the desired conclusion.
 \end{proof}
\begin{remark}\label{casegnul1}
Note that according to Remark \ref{conditions} the condition \eqref{cond2} holds even if $\g=0$ and $\b\ge0.$
Consequently, even in the case  $\a>3,\,\g=0,\,\b\ge0$ the conclusions \eqref{quadrate} and \eqref{ergrate} in Theorem \ref{convexconvergence} hold.
However, if $\b=0$ then we cannot obtain \eqref{ergrategradient}. Nevertheless, if $\b>0$ then \eqref{ergrategradient} becomes:
$$\int_{t_0}^{+\infty}t\left\|\n g\left(x(t)+\frac{\b}{t}\dot{x}(t)\right)\right\|^2dt<+\infty.$$

\end{remark}
\begin{remark}\label{casegnul} Concerning the case $\a=3$, note that \eqref{cond2} is  satisfied if one assumes that $\g=0$ but $\b\ge 0.$ Moreover, in this case \eqref{cond1} is also satisfied since, one has
$a'(t)-b(t)c(t)=- \frac{4\b^2(\a+1)}{(t^2-\b(\a+1))^2}\le 0.$ Hence, also in the case $\a=3,\,\g=0,\,\b\ge0$  \eqref{quadrate} in the conclusion of Theorem \ref{convexconvergence} holds.

Moreover,  if we assume $\b>0$ then \eqref{ergrategradient} becomes:
$$\int_{t_0}^{+\infty}t\left\|\n g\left(x(t)+\frac{\b}{t}\dot{x}(t)\right)\right\|^2dt<+\infty.$$
\end{remark}

Next we show that also the  error $g\left(x(t)\right)-\min g$ is of order $\mathcal{O}(1/t^2).$ For obtaining this result we need the Descent Lemma, see \cite{Nest}.

\begin{lemma}\label{desc} Let $g:\R^m\To\R$ be a Fr\`echet differentiable function with $L_g$ Lipschitz continuous gradient. Then one has
$$g(y)\le g(x)+\<\n g(x),y-x\>+\frac{L_g}{2}\|y-x\|^2,\,\forall x,y\in\R^m.$$
\end{lemma}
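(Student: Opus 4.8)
The plan is to reduce the statement to a one-dimensional computation along the segment joining $x$ and $y$, and then invoke the fundamental theorem of calculus together with the Lipschitz hypothesis. First I would introduce the auxiliary function $\varphi:[0,1]\To\R$ defined by $\varphi(s)=g(x+s(y-x))$. Since $g$ is Fr\`echet differentiable, the chain rule gives $\varphi'(s)=\<\n g(x+s(y-x)),y-x\>$, and because $\n g$ is Lipschitz continuous it is in particular continuous, so $\varphi'$ is continuous on $[0,1]$ and hence integrable.

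Next I would write the increment of $g$ as the integral of this derivative:
$$g(y)-g(x)=\varphi(1)-\varphi(0)=\int_0^1\<\n g(x+s(y-x)),y-x\>\,ds.$$
Subtracting the linear term $\<\n g(x),y-x\>=\int_0^1\<\n g(x),y-x\>\,ds$ from both sides, the residual becomes
$$g(y)-g(x)-\<\n g(x),y-x\>=\int_0^1\<\n g(x+s(y-x))-\n g(x),y-x\>\,ds.$$

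The key estimate then follows from Cauchy-Schwarz and the $L_g$-Lipschitz property of $\n g$: for each $s\in[0,1]$ one has $\<\n g(x+s(y-x))-\n g(x),y-x\>\le\|\n g(x+s(y-x))-\n g(x)\|\,\|y-x\|\le L_g\,\|s(y-x)\|\,\|y-x\|=L_g\,s\,\|y-x\|^2$. Integrating this bound over $[0,1]$ and using $\int_0^1 s\,ds=\tfrac12$ yields exactly $\frac{L_g}{2}\|y-x\|^2$, which gives the claimed inequality. There is no real obstacle here, as this is a standard consequence of the integral mean value formulation; the only point requiring a word of justification is the applicability of the fundamental theorem of calculus to $\varphi$, which is guaranteed by the continuity of $\n g$ inherited from its Lipschitz continuity.
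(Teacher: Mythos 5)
Your proof is correct: the reduction to $\varphi(s)=g(x+s(y-x))$, the integral representation $g(y)-g(x)-\<\n g(x),y-x\>=\int_0^1\<\n g(x+s(y-x))-\n g(x),y-x\>\,ds$, and the Cauchy--Schwarz plus Lipschitz estimate giving the factor $\int_0^1 L_g s\,ds\,\|y-x\|^2=\frac{L_g}{2}\|y-x\|^2$ are all sound, and you correctly justify the use of the fundamental theorem of calculus via the continuity of $\n g$. The paper itself offers no proof of this Descent Lemma, simply citing Nesterov's book, and your argument is precisely the classical one found there, so there is nothing to add.
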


\begin{theorem}\label{convconverg}
Let $x$ be the unique strong global solution of \eqref{dysy} and assume that $\argmin g\neq\emptyset.$
If $\a> 3,\,\g>0$ and $\b\in\R$, then $x$ is bounded and there exists $K\ge 0$ and $t_1\ge t_0$ such that
\begin{equation}\label{velocityrate}
\|\dot{x}(t)\|\le\frac{K}{t},\mbox{ for all }t\ge t_1.
\end{equation}
Further,
\begin{equation}\label{velocityergrate}
\int_{t_0}^{+\infty}t\|\dot{x}(t)\|^2 dt\le+\infty
\end{equation}
and there exists $K_1\ge 0$ and $t_2\ge t$ such that
\begin{equation}\label{valuerate}
g\left(x(t)\right)-\min g\le\frac{K_1}{t^2},\mbox{ for all }t\ge t_2.
\end{equation}
\end{theorem}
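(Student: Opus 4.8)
The plan is to run the Lyapunov argument of Theorem \ref{convexconvergence} once more, but with a \emph{strictly} smaller value of the parameter $b$ in the energy functional \eqref{energyfunction}. Since $\a>3$ the interval $(2,\a-1)$ is nonempty; I would fix any $b\in(2,\a-1)$ and take $c(t)=t$, $d(t)=b(\a-1-b)>0$, with $a(t)$ determined by \eqref{cond3} exactly as in Remark \ref{conditions}. As there, \eqref{cond3} and \eqref{cond5} hold with equality, while \eqref{cond2} (because $\g>0$, so $\g+\frac{\b}{t}>0$ and $a(t)>0$ for $t$ large) and \eqref{cond1} (because $a'(t)-bt=(2-b)t+\mathcal{O}(1)$ with $b>2$) hold for $t$ large. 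The decisive difference from the choice $b=\a-1$ used in Theorem \ref{convexconvergence} lies in condition \eqref{cond6}: here $c(t)\big(b(t)+c'(t)-c(t)\a(t)\big)=t(b+1-\a)=-(\a-1-b)t<0$ is strictly negative. Hence, after discarding the nonpositive terms, the estimate \eqref{generg2} reduces for $t\ge t_1$ to
\begin{equation*}
\dot{\mathcal{E}}(t)\le -(\a-1-b)\,t\,\|\dot{x}(t)\|^2 .
\end{equation*}
Since $a(t)>0$ for $t$ large and $d>0$, the functional is also nonnegative, so $0\le\mathcal{E}(t)\le\mathcal{E}(t_1)$ on $[t_1,+\infty)$.

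From the boundedness of $\mathcal{E}$ the first three conclusions follow directly. The summand $\tfrac{d}{2}\|x(t)-x^*\|^2\le\mathcal{E}(t_1)$ gives boundedness of $x$ (this is precisely where $d>0$, i.e. $b<\a-1$, is exploited). Next, $\tfrac12\|b(x(t)-x^*)+t\dot{x}(t)\|^2\le\mathcal{E}(t_1)$ combined with the boundedness of $x-x^*$ yields $t\|\dot{x}(t)\|\le\|b(x(t)-x^*)+t\dot{x}(t)\|+b\|x(t)-x^*\|\le K$, which is the velocity rate \eqref{velocityrate}. Finally, integrating the displayed inequality from $t_1$ to $T$ and using $\mathcal{E}(T)\ge 0$ gives $(\a-1-b)\int_{t_1}^{T}t\|\dot{x}(t)\|^2\,dt\le\mathcal{E}(t_1)$; letting $T\To+\infty$ and adding the finite contribution of the compact interval $[t_0,t_1]$ proves \eqref{velocityergrate}.

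It remains to transfer the rate $g(z(t))-g^*=\mathcal{O}(1/t^2)$ at the shifted point $z(t):=x(t)+\big(\g+\tfrac{\b}{t}\big)\dot{x}(t)$, furnished by \eqref{quadrate}, to the same rate at $x(t)$ itself. This is the step that genuinely uses the novel feature of \eqref{dysy}, namely that the gradient is evaluated at $z(t)$ and not at $x(t)$, and it is the main obstacle of the proof. First I would record the standard consequence of convexity and $L_g$-smoothness, obtained by applying Lemma \ref{desc} at $w-\tfrac{1}{L_g}\n g(w)$,
\begin{equation*}
\frac{1}{2L_g}\|\n g(w)\|^2\le g(w)-g^*\qquad\text{for all }w\in\R^m ,
\end{equation*}
which together with \eqref{quadrate} gives the pointwise gradient bound $\|\n g(z(t))\|\le \sqrt{2L_g K}/t$. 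Then I would invoke the Descent Lemma \ref{desc} with $y=x(t)$ and $x=z(t)$, using $x(t)-z(t)=-\big(\g+\tfrac{\b}{t}\big)\dot{x}(t)$, to obtain
\begin{equation*}
g(x(t))-g^*\le\big(g(z(t))-g^*\big)-\Big(\g+\tfrac{\b}{t}\Big)\<\n g(z(t)),\dot{x}(t)\>+\frac{L_g}{2}\Big(\g+\tfrac{\b}{t}\Big)^2\|\dot{x}(t)\|^2 .
\end{equation*}

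All three summands are $\mathcal{O}(1/t^2)$: the first by \eqref{quadrate}, the second by Cauchy--Schwarz combined with the gradient bound $\|\n g(z(t))\|=\mathcal{O}(1/t)$ and the velocity rate \eqref{velocityrate}, and the third again by \eqref{velocityrate}. This yields \eqref{valuerate}. The only conceptual subtleties are the two already flagged: the need to take $b$ strictly below $\a-1$ so that the coefficient of $\|\dot{x}(t)\|^2$ does not degenerate to zero (as it does for the borderline choice $b=\a-1$), and the passage from the value of $g$ at $z(t)$ to its value at $x(t)$, which is why a pointwise, rather than merely integrated, control of $\n g(z(t))$ is required.
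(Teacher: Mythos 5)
Your proposal follows the paper's proof almost verbatim in its overall architecture: the same Lyapunov functional \eqref{energyfunction} with the same parameter choice $2<b<\a-1$, $c(t)=t$, $d(t)=b(\a-1-b)>0$ and $a(t)$ determined by \eqref{cond3}; the same extraction of the boundedness of $x$ from the $d$-term, of the rate $t\|\dot{x}(t)\|\le K$ from the mixed term, and of $\int_{t_0}^{+\infty}t\|\dot{x}(t)\|^2dt<+\infty$ from integrating $\dot{\mathcal{E}}(t)\le(b+1-\a)t\|\dot{x}(t)\|^2$; and the same use of the Descent Lemma \ref{desc} with $y=x(t)$, $x=z(t)=x(t)+\left(\g+\frac{\b}{t}\right)\dot{x}(t)$ to transfer the $\mathcal{O}(1/t^2)$ rate from $z(t)$ to $x(t)$. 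The one step where you genuinely diverge is the pointwise bound $\|\n g(z(t))\|=\mathcal{O}(1/t)$: the paper asserts it ``according to \eqref{ergrategradient}'', i.e., from the integral estimate $\int_{t_0}^{+\infty}t^2\|\n g(z(t))\|^2dt<+\infty$, but an $L^2$ bound does not by itself imply pointwise decay (a function with spikes of shrinking width is a counterexample), so as written that step of the paper is a gap, or at least needs a supplementary argument controlling the variation of $t\mapsto\|\n g(z(t))\|$. You instead derive the pointwise bound from the standard smoothness inequality $\frac{1}{2L_g}\|\n g(w)\|^2\le g(w)-g^*$ (Lemma \ref{desc} applied at $w-\frac{1}{L_g}\n g(w)$) combined with \eqref{quadrate}, obtaining $\|\n g(z(t))\|\le\sqrt{2L_gK}/t$ directly. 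This is rigorous and self-contained, and it actually repairs the paper's argument; as a side benefit it makes \eqref{valuerate} independent of \eqref{ergrategradient}, so the transfer step works verbatim in the case $\g=0$, $\b\ge0$ as well (where \eqref{ergrategradient} only holds in a weakened form). The remainder of your estimate --- Cauchy--Schwarz on the cross term together with \eqref{velocityrate} on the quadratic term --- coincides with the paper's, so your proof is correct and, at this one step, strictly sounder than the original.
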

\begin{proof}
For $x^*\in\argmin g$ let $g^*=g(x^*)=\min g$ and  consider the energy function \eqref{energyfunction} 
 with
 $b(t)=b,$ where $2<b<\a-1$, $c(t)=t$, $d(t)=b(\a-1-b)>0$ and
 $$a(t)=\frac{c^2(t)-b(t)c(t)\left(\g+\frac{\b}{t}\right)}{1-\frac{\b}{t^2}-\frac{\a}{t}\left(\g+\frac{\b}{t}\right)}=\frac{(t^2-b \g t-b\b)t^2}{t^2-\a\g t-\b(\a+1)}=\left(1+\frac{(\a-b)\g t-\b(\a+1-b)}{t^2-\a\g t-\b(\a+1)}\right)t^2.$$
 According to Remark \ref{conditions} there exists $t_1\ge t_0$ such that the conditions \eqref{cond1}-\eqref{cond6} are satisfied.

From the definition of $\mathcal{E}$ one has
$$\|b(t)(x(t)-x^*)+c(t)\dot{x}(t)\|\le\sqrt{2\mathcal{E}(t)},$$
and
$$\|x(t)-x^*\|\le \sqrt{\frac{2}{d(t)}\mathcal{E}(t)}$$
that is
$$\|b(x(t)-x^*)+t\dot{x}(t)\|\le\sqrt{2\mathcal{E}(t)},$$
and
$$\|x(t)-x^*\|\le \sqrt{\frac{2}{b(\a-1-b)}\mathcal{E}(t)}.$$
By using the fact that $\mathcal{E}$ nonincreasing on an interval $[t_1,+\infty)$ the latter inequality assures that $x$ is bounded.

Now, by using the inequality $\|X-Y\|\ge\|X\|-\|Y\|$ we get $$\|b(x(t)-x^*)+t\dot{x}(t)\|\ge t\|\dot{x}(t)\|-\|b(x(t)-x^*)\|,$$ hence for all $t\ge t_1$ one has
$$t\|\dot{x}(t)\|\le \sqrt{2\mathcal{E}(t)}+\|b(x(t)-x^*)\|\le\left(1+\sqrt{\frac{1}{\a-1-b}}\right)\sqrt{2\mathcal{E}(t)}\le K,$$
where $K=\left(1+\sqrt{\frac{1}{\a-1-b}}\right)\sqrt{2\mathcal{E}(t_1)}.$

Further, \eqref{cond6} becomes $(b+1-\a)t<0,$ hence for all $t\ge t_1$ one has
$$\dot{\mathcal{E}}(t)\le(b+1-\a)t\|\dot{x}(t)\|^2.$$

By integrating from $t_1$ to $T>t_1$ one gets
$$\int_{t_1}^Tt\|\dot{x}(t)\|^2dt\le\frac{1}{\a-1-b}(\mathcal{E}(t_1)-\mathcal{E}(T))\le \frac{1}{\a-1-b}\mathcal{E}(t_1).$$
By letting $T\To+\infty$ we obtain
$$\int_{t_0}^{+\infty}t\|\dot{x}(t)\|^2 dt\le+\infty.$$

Now, by using Lemma \ref{desc} with $y=x(t)$ and $x=x(t)+\left(\g+\frac{\b}{t}\right)\dot{x}(t)$ we obtain
\begin{equation}\label{e}
g(x(t))- g\left(x(t)+\left(\g+\frac{\b}{t}\right)\dot{x}(t)\right)\le
\end{equation}
$$\left\<\n g\left(x(t)+\left(\g+\frac{\b}{t}\right)\dot{x}(t)\right),-\left(\g+\frac{\b}{t}\right)\dot{x}(t)\right\>+\frac{L_g}{2}\left\|\left(\g+\frac{\b}{t}\right)\dot{x}(t)\right\|^2\le$$
$$\frac{|\g t+\b|}{t}\left\|\n g\left(x(t)+\left(\g+\frac{\b}{t}\right)\dot{x}(t)\right)\right\|\left\|\dot{x}(t)\right\|+\frac{L_g}{2}\left(\g+\frac{\b}{t}\right)^2\|\dot{x}(t)\|^2.$$

Now according to \eqref{ergrategradient} there exists $t'\ge t_0$ and $K'>0$ such that
$$\left\|\n g\left(x(t)+\left(\g+\frac{\b}{t}\right)\dot{x}(t)\right)\right\|\le\frac{K'}{t},\mbox{ for all }t\ge t'.$$
Further \eqref{velocityrate} assures that there exists $t_1\ge t_0$ and $K>0$ such  that
$$\|\dot{x}(t)\|\le\frac{K}{t},\mbox{ for all }t\ge t_1.$$
Obviously, $$\left|\g+\frac{\b}{t}\right|\le \max\left(\g,\g+\frac{\b}{t_0}\right),\mbox{ for all }t\ge t_0,$$
hence \eqref{e} assures that there exists $K_1'>0$ and $t_2'\ge t_0$ such that
\begin{equation}\label{decay}
g(x(t))- g\left(x(t)+\left(\g+\frac{\b}{t}\right)\dot{x}(t)\right)\le\frac{K_1'}{t^2},\mbox{ for all }t\ge t_2'.
\end{equation}
Now, by adding \eqref{quadrate} and \eqref{decay} we get that there exists $K_1>0$ and $t_2\ge t_0$ such that
$$\left(g\left(x(t)+\left(\g+\frac{\b}{t}\right)\dot{x}(t)\right)-\min g\right)+\left(g(x(t))- g\left(x(t)+\left(\g+\frac{\b}{t}\right)\dot{x}(t)\right)\right)\le\frac{K_1}{t^2},\mbox{ for all }t\ge t_2$$ that is

$$g(x(t))-\min g\le \frac{K_1}{t^2},\mbox{ for all }t\ge t_2.$$
\end{proof}

\begin{remark} Note that if we assume that $\b\ge0$ then one can allow $\g=0$ in the hypotheses of Theorem \ref{convconverg}, since in this case condition \eqref{cond2} is satisfied and the conclusions of Theorem \ref{convconverg} hold.
\end{remark}

\begin{remark}\label{L1deriv} Note that \eqref{ergrategradient}, which holds whenever $\a>3$ and $\g>0$, assures that
$$t\left\|\n g\left(x(t)+\left(\g+\frac{\b}{t}\right)\dot{x}(t)\right)\right\|\in L^2([t_0,+\infty),\R).$$
Further, \eqref{velocityergrate} assures that
\begin{equation}\label{firstorderderivlp}
\sqrt{t}\|\dot{x}(t)\|\in L^2([t_0,+\infty),\R).
\end{equation}

Consequently, the system \eqref{dysy} leads to
$$\|t\ddot{x}(t)\|=\left\|\a\dot{x}(t)+t\n g\left(x(t)+\left(\g+\frac{\b}{t}\right)\dot{x}(t)\right)\right\|\le \a\|\dot{x}(t)\|+t\left\|\n g\left(x(t)+\left(\g+\frac{\b}{t}\right)\dot{x}(t)\right)\right\|$$
hence,
\begin{equation}\label{seconderivlp}
t\|\ddot{x}(t)\|\in L^2([t_0,+\infty),\R).
\end{equation}
Now, from \eqref{thirdderiv} we have
 $$\sqrt{t}\| x^{(3)}(t) \| \leq K(\sqrt{t} \| \dot{x}(t) \| + \sqrt{t} \| \ddot{x}(t) \|)$$
 for some $K>0$ and for almost every $t\ge t_0$, which combined with \eqref{firstorderderivlp} and \eqref{seconderivlp} gives
\begin{equation}\label{thirdderivlp}
\sqrt{t}\| x^{(3)}(t) \|\in L^2([t_0,+\infty),\R).
\end{equation}
\end{remark}

\begin{remark} Notice that \eqref{firstorderderivlp}, \eqref{seconderivlp} and \eqref{thirdderivlp} assure in particular that
\begin{equation}\label{zerolimit}
\lim_{t\To+\infty}\|\dot{x}(t)\|=\lim_{t\To+\infty}\|\ddot{x}(t)\|=\lim_{t\To+\infty}\|{x}^{(3)}(t)\|=0.
\end{equation}
\end{remark}

\begin{remark}\label{zerolimitgnull} In the case $\g=0$ and $\b\ge0$ according to Remark \ref{casegnul1} one has $$\sqrt{t}\left\|\n g\left(x(t)+\left(\g+\frac{\b}{t}\right)\dot{x}(t)\right)\right\|\in L^2([t_0,+\infty),\R).$$ Hence, as in Remark \ref{L1deriv} we derive that
$$\sqrt{t}\|\ddot{x}(t)\|,\,\sqrt{t}\| x^{(3)}(t) \|\in L^2([t_0,+\infty),\R)$$
and consequently \eqref{zerolimit} holds.
\end{remark}

\subsection{The convergence of the generated trajectories}

In this section we show convergence of the generated trajectories to a minimum point of the objective $g.$ The main tool that we use in order to attain this goal will be the following continuous version of Opial Lemma, (see \cite{Bruck, Opial, att-c-p-r-math-pr2018}.

\begin{lemma}\label{Opial}
Let $H$ to be a separable Hilbert space and $S \subseteq H$, with $S \neq \emptyset$. Further, consider $x : [t_0, +\infty) \to H$ a given map. Suppose that the following conditions are satisfied :
\begin{align*}
& (i) \quad \forall z \in S, \, \exists \lim\limits_{t \To +\infty} \| x(t) - z \| \\
& (ii) \quad \text{every weak sequentially limit point of } x(t) \text{ belongs to the set }S.
\end{align*}
Then, $x(t)$ converges weakly to a point of $S$ as $t \to + \infty$.
\end{lemma}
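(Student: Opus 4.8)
The plan is to follow the classical argument for Opial's lemma, reducing weak convergence to the \emph{uniqueness} of weak sequential limit points. First I would fix some $z_0 \in S$ (possible since $S \neq \emptyset$); by hypothesis $(i)$ the real-valued function $t \mapsto \|x(t) - z_0\|$ converges, hence is bounded, so the trajectory $x(t)$ is bounded on $[t_0, +\infty)$. Since $H$ is a Hilbert space, bounded sequences admit weakly convergent subsequences, so along any sequence $t_n \To +\infty$ the trajectory has weakly convergent subsequences; in particular the set of weak sequential limit points of $x(t)$ is nonempty, and by $(ii)$ it is contained in $S$.

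The core of the argument is to prove that this set is a singleton. Suppose $x_1$ and $x_2$ are two weak sequential limit points; by $(ii)$ both lie in $S$, so by $(i)$ the limits $\lim_{t \To +\infty} \|x(t) - x_1\|^2$ and $\lim_{t \To +\infty} \|x(t) - x_2\|^2$ both exist. Expanding the squared norms and subtracting, I would write
\[
\|x(t) - x_1\|^2 - \|x(t) - x_2\|^2 = -2\<x(t), x_1 - x_2\> + \|x_1\|^2 - \|x_2\|^2,
\]
so that $\<x(t), x_1 - x_2\>$ converges to some limit $\ell$ as $t \To +\infty$. Choosing a sequence $t_n \To +\infty$ with $x(t_n) \rightharpoonup x_1$ gives $\ell = \<x_1, x_1 - x_2\>$, while a sequence $s_n \To +\infty$ with $x(s_n) \rightharpoonup x_2$ gives $\ell = \<x_2, x_1 - x_2\>$. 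Subtracting the two yields $\|x_1 - x_2\|^2 = 0$, i.e. $x_1 = x_2$.

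Finally I would upgrade ``unique weak sequential limit point'' to genuine weak convergence. If $x(t)$ did \emph{not} converge weakly to the common limit point $\bar{x}$, then there would exist some $\phi \in H$, some $\e > 0$ and a sequence $t_n \To +\infty$ with $|\<x(t_n) - \bar{x}, \phi\>| \geq \e$; by boundedness a subsequence of $(x(t_n))$ would converge weakly to some $y$ satisfying $|\<y - \bar{x}, \phi\>| \geq \e$, hence $y \neq \bar{x}$, contradicting the uniqueness just established. Therefore $x(t) \rightharpoonup \bar{x}$, and $\bar{x} \in S$ by $(ii)$.

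I expect the main obstacle to be not any single computation but the careful handling of the two soft-analysis passages: invoking weak sequential compactness of bounded sets in $H$ (which rests on reflexivity of Hilbert spaces together with the Eberlein--\v{S}mulian theorem, or a direct diagonal argument in the separable case), and the final step that converts uniqueness of the limit point into weak convergence of the whole trajectory as the \emph{continuous} parameter $t$ tends to infinity. The algebraic identity itself is routine; everything hinges on applying hypotheses $(i)$ and $(ii)$ at exactly the right places.
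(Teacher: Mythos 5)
Your proof is correct and complete: it is the classical Opial argument (boundedness of the trajectory from hypothesis (i), weak sequential compactness of bounded sets in a Hilbert space, uniqueness of weak cluster points via the expansion $\|x(t)-x_1\|^2-\|x(t)-x_2\|^2=-2\<x(t),x_1-x_2\>+\|x_1\|^2-\|x_2\|^2$ evaluated along two weakly convergent subsequences, and a contradiction argument to pass from a unique cluster point to weak convergence of the whole trajectory). The paper itself does not prove this lemma but simply cites \cite{Bruck,Opial,att-c-p-r-math-pr2018}, and the proof in those references is essentially the one you give, so your approach matches the intended one.
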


\begin{remark}
In the setting of the proof concerning the convergence of the generated trajectories, we consider the set $S$ to be $argmin(g)$. Moreover, the Hilbert space $H$ is  $\mathbb{R}^m$, and this implies that we actually deduce strong convergence of a strong global solution of the dynamical system \eqref{dysy} to a minimum of the objective function $g.$
\end{remark}

Consider now the set of limit points of the trajectory $x$, that is
$$\omega (x) =\{ \ol x\in\R^m: \exists (t_n)_{n\in\N}\subseteq\R,\, t_n\To+\infty,\,n\To+\infty \text{ such that } x(t_n) \To \ol{x},\, n\To+\infty\}.$$

We show that $\omega(x)\subseteq \argmin g.$  We emphasize that since $g$ is convex one has
 $$\argmin g=\crit g:=\{\ol x\in\R^m:\n g(\ol x)=0.\}.$$
 We have the following result.
 \begin{lemma}\label{limitpoint} Let $x$ be the unique strong global solution of \eqref{dysy} and assume that $\argmin g\neq\emptyset.$
If $\a> 3,\,\g>0$ and $\b\in\R$, then the following assumptions hold.
\begin{itemize}
\item[(i)] $\omega(x)=\omega\left(x(\cdot)+\left(\g+\frac{\b}{t}\right)\dot{x}(\cdot)\right)$;
\item[(ii)] $\omega(x)\subseteq\argmin g.$
\end{itemize}
  \end{lemma}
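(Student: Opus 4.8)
The plan is to prove the two statements in order, relying on the convergence rates already established in Theorems \ref{convexconvergence} and \ref{convconverg}, together with the vanishing velocity established in \eqref{zerolimit}. The crucial fact I would exploit throughout is that $\left\|\left(\g+\frac{\b}{t}\right)\dot{x}(t)\right\|\to 0$ as $t\To+\infty$: indeed $\left|\g+\frac{\b}{t}\right|$ is bounded on $[t_0,+\infty)$ (by $\max\left(\g,\g+\frac{\b}{t_0}\right)$, as already noted in the proof of Theorem \ref{convconverg}), while $\|\dot{x}(t)\|\le\frac{K}{t}\to 0$ by \eqref{velocityrate}.

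For part $(i)$, I would argue by a direct comparison of the two trajectories. Set $y(t)=x(t)+\left(\g+\frac{\b}{t}\right)\dot{x}(t)$, so that $\|y(t)-x(t)\|=\left\|\left(\g+\frac{\b}{t}\right)\dot{x}(t)\right\|\To 0$. If $\ol x\in\omega(x)$, then there is a sequence $t_n\To+\infty$ with $x(t_n)\To\ol x$; since $\|y(t_n)-x(t_n)\|\To 0$, the triangle inequality gives $y(t_n)\To\ol x$ as well, so $\ol x\in\omega(y)$. The reverse inclusion is entirely symmetric. This establishes $\omega(x)=\omega(y)$, and the argument is essentially routine once the velocity estimate is in place.

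For part $(ii)$, I would take $\ol x\in\omega(x)$ with $x(t_n)\To\ol x$ and show $\n g(\ol x)=0$, which by convexity is equivalent to $\ol x\in\argmin g=\crit g$. By part $(i)$ we also have $y(t_n)\To\ol x$. The natural route is to combine two facts: first, from \eqref{ergrategradient} we know $\int_{t_0}^{+\infty}t^2\|\n g(y(t))\|^2\,dt<+\infty$, which forces $\|\n g(y(t))\|$ to be small along a suitable sequence; second, $\n g$ is continuous, so passing to the limit along $y(t_n)\To\ol x$ should yield $\n g(\ol x)=0$. The delicate point is that an integrability condition such as \eqref{ergrategradient} does not by itself guarantee that $\|\n g(y(t_n))\|\To 0$ along the \emph{specific} sequence $(t_n)$ realizing the limit point; one typically needs either to pass to a subsequence along which $\n g(y(t))\To 0$ (combining integrability with the uniform continuity/bounded-variation type estimates from \eqref{thirdderivlp} and \eqref{zerolimit} to rule out oscillation), or to invoke directly that $\lim_{t\To+\infty}\|\ddot{x}(t)\|=0$ and $\lim_{t\To+\infty}\|\dot{x}(t)\|=0$ from \eqref{zerolimit}.

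In fact the cleanest finish avoids \eqref{ergrategradient} altogether: rewriting the governing equation as $\n g(y(t))=-\ddot{x}(t)-\frac{\a}{t}\dot{x}(t)$, and noting that by \eqref{zerolimit} both $\|\ddot{x}(t)\|\To 0$ and $\frac{\a}{t}\|\dot{x}(t)\|\To 0$, we obtain $\|\n g(y(t))\|\To 0$ as $t\To+\infty$ \emph{unconditionally}, not merely along a subsequence. Then continuity of $\n g$ applied to $y(t_n)\To\ol x$ gives $\n g(\ol x)=\lim_n\n g(y(t_n))=0$, whence $\ol x\in\crit g=\argmin g$. I expect the main obstacle to be precisely this identification of the limit of $\n g(y(t))$: one must be careful to use the full strength of \eqref{zerolimit} (which rests on \eqref{seconderivlp}, i.e. $t\|\ddot{x}(t)\|\in L^2$) rather than an integrability statement that only controls an average, and to confirm that the hypotheses $\a>3,\g>0,\b\in\R$ are exactly those under which \eqref{zerolimit} was derived.
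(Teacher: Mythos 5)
Your proof is correct and takes essentially the same route as the paper: part (i) via $\left(\g+\frac{\b}{t}\right)\dot{x}(t)\To 0$ from \eqref{zerolimit}, and part (ii) by rewriting the governing equation as $\n g\left(x(t)+\left(\g+\frac{\b}{t}\right)\dot{x}(t)\right)=-\ddot{x}(t)-\frac{\a}{t}\dot{x}(t)\To 0$ and then using continuity of $\n g$ along $x(t_n)\To\ol x$, exactly as the paper does. Your instinct to discard the \eqref{ergrategradient} route was sound, since that integrability bound alone does not control the gradient along the specific sequence $(t_n)$, and the paper likewise never uses it here.
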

 \begin{proof}
 Indeed \eqref{zerolimit} assures  that $\lim_{t \To+\infty} \left( \gamma + \dfrac{\beta}{t} \right) \dot{x}(t) = 0$, which immediately  proves (i).

For proving (ii) consider $\ol x\in \omega(x).$ Then, there exists $(t_n)_{n\in\N}\subseteq\R,\, t_n\To+\infty,\,n\To+\infty$ such that
 $\lim_{n\To+\infty}x(t_n) = \ol{x}.$ Now, since $\n g$ is continuous and
 $\lim_{n\To+\infty}\left(x(t_n)+\left(\g+\frac{\b}{t_n}\right)\dot{x}(t_n)\right)=\ol x$
 one has
$$\lim_{n\To+\infty}\n g\left(x(t_n)+\left(\g+\frac{\b}{t_n}\right)\dot{x}(t_n)\right)=\n g(\ol x).$$

Further, according to \eqref{zerolimit}
$$\lim_{n\To+\infty}\left(\ddot{x}(t_n)+\frac{\a}{t_n}\dot{x}(t_n)\right)=0.$$
Now, the system \eqref{dysy} gives
$$0=\lim_{n\To+\infty}\left(\ddot{x}(t_n)+\frac{\a}{t_n}\dot{x}(t_n)+\n g\left(x(t_n)+\left(\g+\frac{\b}{t_n}\right)\dot{x}(t_n)\right)\right)=\n g(\ol x)$$
that is $\ol x\in\argmin g$ and this proves (ii).
\end{proof}
\begin{remark}\label{Opialgnull} Obviously, according to Remark \ref{zerolimitgnull} the conclusion of Lemma \ref{limitpoint} remains valid also in the case  $\a>3,\,\g=0$ and $\b\ge0.$
Note that (ii) from the conclusion of Lemma \ref{limitpoint} is actually the condition (ii) from Lemma \ref{Opial}. For proving (i) from Lemma \ref{Opial}, that is, the limit $\lim_{t\To+\infty}\|x(t)-x^*\|$ exists for every $x^*\in\argmin g,$ we need the following result from \cite{att-c-p-r-math-pr2018}.
\end{remark}

\begin{lemma}\label{existenceof limit}(Lemma A.4.\, \cite{att-c-p-r-math-pr2018}) Let $t_0>0$, and let $w:[t_0,+\infty)\To\R$ be a continuously differentiable function which is bounded from below. Assume that
$$ t\ddot{w}(t) + \a \dot{w}(t) \le G(t)$$
for some $\a > 1,$  almost every $t >t_0$, and some nonnegative function $G\in L^1(t_0,+\infty).$ Then, the positive part $[\dot{w}]_+$ of $\dot{w}$ belongs
to $L^1(t_0,+\infty)$ and limit $\lim_{t\To+\infty} w(t)$ exists.
\end{lemma}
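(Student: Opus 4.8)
The plan is to read the hypothesis as a first-order differential inequality in $\dot{w}$ and integrate it against the natural integrating factor. Assuming, as is implicit in such statements, that $\dot{w}$ is locally absolutely continuous (so that $\ddot{w}$ exists almost everywhere and the fundamental theorem of calculus applies), I would write the hypothesis as $t\ddot{w}(t)+\a\dot{w}(t)\le G(t)$ and multiply by $t^{\a-1}$. The left-hand side then becomes a total derivative, namely $\frac{d}{dt}\left(t^{\a}\dot{w}(t)\right)\le t^{\a-1}G(t)$ for almost every $t>t_0$. Integrating from $t_0$ to $t$ gives $t^{\a}\dot{w}(t)\le t_0^{\a}\dot{w}(t_0)+\int_{t_0}^{t}s^{\a-1}G(s)\,ds$, and since $G\ge 0$, dividing by $t^{\a}$ and taking positive parts yields the pointwise bound $[\dot{w}(t)]_+\le t^{-\a}\left(\left|t_0^{\a}\dot{w}(t_0)\right|+\int_{t_0}^{t}s^{\a-1}G(s)\,ds\right)$.

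Next I would integrate this bound over $[t_0,+\infty)$. The first contribution is $\left|t_0^{\a}\dot{w}(t_0)\right|\int_{t_0}^{\infty}t^{-\a}\,dt$, which is finite precisely because $\a>1$. For the second contribution I would invoke the Tonelli theorem, legitimate since all integrands are nonnegative, to interchange the order of integration: $\int_{t_0}^{\infty}t^{-\a}\int_{t_0}^{t}s^{\a-1}G(s)\,ds\,dt=\int_{t_0}^{\infty}s^{\a-1}G(s)\left(\int_{s}^{\infty}t^{-\a}\,dt\right)ds$. Using $\int_{s}^{\infty}t^{-\a}\,dt=\frac{s^{1-\a}}{\a-1}$, the inner weight exactly cancels the factor $s^{\a-1}$, so the double integral collapses to $\frac{1}{\a-1}\int_{t_0}^{\infty}G(s)\,ds<+\infty$, because $G\in L^1(t_0,+\infty)$. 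Hence $[\dot{w}]_+\in L^1(t_0,+\infty)$, which is the first assertion.

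Finally, to obtain convergence of $w$, I would split the derivative via $\dot{w}=[\dot{w}]_+-[\dot{w}]_-$ and introduce the auxiliary function $\phi(t)=w(t)-\int_{t_0}^{t}[\dot{w}(s)]_+\,ds$. Then $\dot{\phi}(t)=-[\dot{w}(t)]_-\le 0$, so $\phi$ is nonincreasing; moreover $\phi(t)\ge w(t)-\int_{t_0}^{+\infty}[\dot{w}(s)]_+\,ds$ is bounded from below, since $w$ is bounded from below and $[\dot{w}]_+\in L^1$. A nonincreasing function that is bounded from below converges, so $\lim_{t\To+\infty}\phi(t)$ exists and is finite; and because $\int_{t_0}^{t}[\dot{w}(s)]_+\,ds$ converges to the finite value $\int_{t_0}^{+\infty}[\dot{w}(s)]_+\,ds$, the identity $w(t)=\phi(t)+\int_{t_0}^{t}[\dot{w}(s)]_+\,ds$ shows that $\lim_{t\To+\infty}w(t)$ exists.

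The decisive idea is the choice of the integrating factor $t^{\a}$, after which everything follows from elementary estimates. I do not expect a genuine obstacle; the only points requiring care are bookkeeping ones, namely that the integrating-factor manipulation is carried out in the almost-everywhere and absolutely continuous sense rather than classically, and that the interchange of integrals is justified. Since all integrands are nonnegative, the latter is immediate from Tonelli, so this is a step to state cleanly rather than a difficulty to overcome.
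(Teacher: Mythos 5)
Your proof is correct, and it is essentially the canonical argument for this result: the paper itself states the lemma without proof, citing Lemma A.4 of \cite{att-c-p-r-math-pr2018}, whose proof is exactly your computation --- the integrating factor giving $\frac{d}{dt}\left(t^{\a}\dot{w}(t)\right)\le t^{\a-1}G(t)$, the Tonelli interchange using $\a>1$, and the nonincreasing bounded-below auxiliary function $w(t)-\int_{t_0}^{t}[\dot{w}(s)]_+\,ds$. The local absolute continuity of $\dot{w}$ that you rightly flag as implicit is harmless in context, since the paper applies the lemma only to $h_{x^*}(t)=\frac12\|x(t)-x^*\|^2$ with $x$ a strong global solution of \eqref{dysy}, for which $\dot{x}$, and hence $\dot{h}_{x^*}$, is locally absolutely continuous.
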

Now we can prove the following.

\begin{lemma}\label{limexist} Let $x$ be the unique strong global solution of \eqref{dysy} and assume that $\argmin g\neq\emptyset.$
If $\a> 3,\,\g>0$ and $\b\in\R$, then for every $x^*\in\argmin g$ there exists the limit
$$\lim_{t\To+\infty}\|x(t)-x^*\|.$$
\end{lemma}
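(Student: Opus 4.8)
The plan is to apply Lemma~\ref{existenceof limit} to the auxiliary function $w(t)=\frac12\|x(t)-x^*\|^2$, which is nonnegative, hence bounded from below, and continuously differentiable since $\dot w(t)=\langle\dot x(t),x(t)-x^*\rangle$ with $x,\dot x$ continuous. First I would compute $\ddot w(t)=\langle\ddot x(t),x(t)-x^*\rangle+\|\dot x(t)\|^2$ and substitute the governing equation of \eqref{dysy}, namely $\ddot x(t)=-\frac{\a}{t}\dot x(t)-\n g\big(x(t)+(\g+\frac{\b}{t})\dot x(t)\big)$. Writing $y(t):=x(t)+(\g+\frac{\b}{t})\dot x(t)$, the factor $t$ turns $\frac{\a}{t}\dot x(t)$ into $\a\dot x(t)$, whose pairing with $x(t)-x^*$ exactly cancels the $\a\dot w(t)$ term, leaving
\[
t\ddot w(t)+\a\dot w(t)=-t\langle\n g(y(t)),x(t)-x^*\rangle+t\|\dot x(t)\|^2.
\]

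Next I would exploit convexity. Using the decomposition $x(t)-x^*=(y(t)-x^*)-(\g+\frac{\b}{t})\dot x(t)$ together with the subgradient inequality $\langle\n g(y(t)),y(t)-x^*\rangle\ge g(y(t))-g^*\ge0$, the first term on the right is bounded above:
\[
-t\langle\n g(y(t)),x(t)-x^*\rangle\le (\g t+\b)\langle\n g(y(t)),\dot x(t)\rangle\le|\g t+\b|\,\|\n g(y(t))\|\,\|\dot x(t)\|.
\]
Hence $t\ddot w(t)+\a\dot w(t)\le G(t)$ for almost every $t$, with the nonnegative function $G(t):=|\g t+\b|\,\|\n g(y(t))\|\,\|\dot x(t)\|+t\|\dot x(t)\|^2$.

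The crux is to verify that $G\in L^1([t_0,+\infty))$. The second summand is integrable by \eqref{velocityergrate}. For the cross term, since $|\g t+\b|\le C t$ for $t$ large, it suffices to bound $\int t\,\|\n g(y(t))\|\,\|\dot x(t)\|\,dt$; splitting the weight as $t=\sqrt t\cdot\sqrt t$ and applying the elementary inequality $ab\le\frac12(a^2+b^2)$ gives
\[
t\,\|\n g(y(t))\|\,\|\dot x(t)\|\le\tfrac12\big(t\|\n g(y(t))\|^2+t\|\dot x(t)\|^2\big),
\]
whose integral is finite because $t\le t^2$ for $t\ge1$ lets me dominate the first piece by \eqref{ergrategradient} while the second is again \eqref{velocityergrate}. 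Equivalently, one may apply the Cauchy--Schwarz inequality directly to the two $L^2$ functions produced by \eqref{ergrategradient} and \eqref{velocityergrate} after redistributing the powers of $t$. This is the one place demanding care, as the exponents of $t$ must be split between the two factors precisely so that each resulting integrand is covered by a rate already established.

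With $w$ bounded below, $G\ge0$, $G\in L^1$, and $\a>3>1$, Lemma~\ref{existenceof limit} yields that $\lim_{t\To+\infty}w(t)$ exists; equivalently $\lim_{t\To+\infty}\|x(t)-x^*\|^2$ exists, and taking square roots gives the claimed limit. I expect the main obstacle to be exactly the integrability of the cross term in $G$; the remainder is a routine differentiation-and-substitution followed by a direct invocation of the cited lemma.
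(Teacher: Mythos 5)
Your proposal is correct and takes essentially the same route as the paper: the same function $w(t)=\frac12\|x(t)-x^*\|^2$, the same substitution of the ODE and convexity decomposition leading to $t\ddot w(t)+\a\dot w(t)\le G(t)$ with $G\ge0$ in $L^1$, and the same appeal to Lemma \ref{existenceof limit}. The only (harmless) local difference is the cross term: the paper substitutes the system a second time to replace $\n g\left(x(t)+\left(\g+\frac{\b}{t}\right)\dot{x}(t)\right)$ by $-\ddot x(t)-\frac{\a}{t}\dot x(t)$ and then invokes the $L^2$ estimate on $t\|\ddot x(t)\|$ from Remark \ref{L1deriv}, whereas you bound $(\g t+\b)\langle\n g(y(t)),\dot x(t)\rangle$ directly via Cauchy--Schwarz/Young from \eqref{ergrategradient} and \eqref{velocityergrate} --- equivalent in substance (the paper's \eqref{seconderivlp} is itself derived from \eqref{ergrategradient}), with the minor bonus that your $G$ is manifestly nonnegative rather than only eventually so.
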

\begin{proof}
Let $ x^*\in \argmin g$ and define the function $h_{x^*}:[t_0,+\infty)\To\R,\,h_{x^*}(t)= \dfrac{1}{2} \| x(t) - x^* \|^2$. Using the chain rule with respect to the differentiation of $h_{x^*}$, we obtain that
\begin{align*}
\dot{h}_{x^*}(t) = \< \dot{x}(t), x(t)-x^* \>
\end{align*}
and that
\begin{align*}
\ddot{h}_{x^*}(t) = \<\ddot{x}(t), x(t)-x^* \> + \| \dot{x}(t) \|^2.
\end{align*}
Let us denote $g^{\ast} = g(x^{\ast}) = min(g)$. Using the dynamical system (\ref{dysy}), one has that
\begin{align*}
\ddot{h}_{x^*}(t) + \dfrac{\alpha}{t} \dot{h}_{x^*}(t) = -\left\< \nabla g \left( x(t) + \left( \gamma + \dfrac{\beta}{t} \right) \dot{x}(t) \right), x(t) - x^{\ast} \right\> + \| \dot{x}(t) \|^2.
\end{align*}
This means that
\begin{equation}\label{use}
\ddot{h}_{x^*}(t) + \dfrac{\alpha}{t} \dot{h}_{x^*}(t) =- \left\< \nabla g \left( x(t) + \left( \gamma + \dfrac{\beta}{t} \right) \dot{x}(t) \right), x(t) + \left( \gamma + \dfrac{\beta}{t} \right) \dot{x}(t)- x^* \right\> + \| \dot{x}(t) \|^2
\end{equation}
$$
 + \left\< \nabla g \left( x(t) + \left( \gamma + \dfrac{\beta}{t} \right) \dot{x}(t) \right), \left( \gamma + \dfrac{\beta}{t} \right) \dot{x}(t) \right\>.
$$
By the convexity of the mapping $g$ and taking into account that $g^*=g(x^*)=\min g$, we obtain
$$- \left\< \nabla g \left( x(t) + \left( \gamma + \dfrac{\beta}{t} \right) \dot{x}(t) \right), x(t) + \left( \gamma + \dfrac{\beta}{t} \right) \dot{x}(t)- x^* \right\>\le g^* - g \left( x(t) + \left( \gamma + \dfrac{\beta}{t} \right) \dot{x}(t) \right)\le 0,$$
hence  by using \eqref{dysy} the inequality \eqref{use} becomes
\begin{align*}
\ddot{h}_{x^*}(t) + \dfrac{\alpha}{t} \dot{h}_{x^*}(t) \leq g^* - g \left( x(t) + \left( \gamma + \dfrac{\beta}{t} \right) \dot{x}(t) \right) + \left( 1 - \dfrac{\alpha}{t} \left( \gamma + \dfrac{\beta}{t} \right) \right) \| \dot{x}(t) \|^2 - \left( \gamma + \dfrac{\beta}{t} \right) \< \dot{x}(t), \ddot{x}(t) \>.
\end{align*}

Consequently, one has
\begin{equation}\label{use1}
t\ddot{h}_{x^*}(t) + \a \dot{h}_{x^*}(t) \leq  \left( t - \a \left( \gamma + \dfrac{\beta}{t} \right) \right) \| \dot{x}(t) \|^2 - \left( \g t + \b\right) \< \dot{x}(t), \ddot{x}(t) \>.
\end{equation}

Now, according to \eqref{firstorderderivlp} one has
$$t\|\dot{x}(t)\|^2\in L^1([t_0,+\infty),\R).$$
Consequently,
\begin{equation}\label{der1}
\left( t - \a \left( \gamma + \dfrac{\beta}{t} \right) \right) \| \dot{x}(t) \|^2\in L^1([t_0,+\infty),\R).
\end{equation}

Further,
$$- \left( \g t + \b\right) \< \dot{x}(t), \ddot{x}(t) \>\le \frac12\|\dot{x}(t)\|^2+\frac12\left( \g t + \b\right)^2\|\ddot{x}(t)\|^2$$
and  \eqref{seconderivlp} assures that
$$t^2\|\ddot{x}(t)\|^2\in L^1([t_0,+\infty),\R),$$
hence
\begin{equation}\label{der2}
- \left( \g t + \b\right) \< \dot{x}(t), \ddot{x}(t) \>\in L^1([t_0,+\infty),\R).
\end{equation}

According to \eqref{der1} and \eqref{der2} we get that the function
$$G(t)= \left( t - \a \left( \gamma + \dfrac{\beta}{t} \right) \right) \| \dot{x}(t) \|^2 +\frac12\|\dot{x}(t)\|^2+\frac12\left( \g t + \b\right)^2\|\ddot{x}(t)\|^2\in L^1([t_0,+\infty),\R).$$  Moreover, it is obvious that there exists $t_1\ge t_0$ such that $G$ is non negative for every $t\ge t_1.$ From \eqref{use1}
one has
\begin{equation}\label{use2}
t\ddot{h}_{x^*}(t) + \a \dot{h}_{x^*}(t) \leq G(t),\mbox{ for every }t\ge t_0,
\end{equation}
hence Lemma \ref{existenceof limit} leads to the existence of the limit
$$\lim_{t\To+\infty}h_{x^*}(t)$$
and consequently the limit
$$\lim_{t\To+\infty}\|x(t)-x^*\|$$
also exists.
\end{proof}

Now, we present the main result of this subsection regarding the convergence of the solution of the dynamical system (\ref{dysy}) as $t \to + \infty$.

\begin{theorem}\label{convergencetraj}
Let $x$ be the unique strong global solution of the dynamical system (\ref{dysy}) and assume that $\argmin g\neq\emptyset.$
If $\a> 3,\,\g>0$ and $\b\in\R$, then $x(t)$ converges to a point in $\argmin g$ as $t \To +\infty$.
\end{theorem}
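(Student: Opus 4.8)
The plan is to verify that the two hypotheses of the continuous Opial Lemma (Lemma \ref{Opial}) hold with $H=\R^m$ and $S=\argmin g$, and then invoke that lemma directly. Since the ambient space is the finite-dimensional $\R^m$, weak sequential convergence coincides with strong convergence, so the conclusion of Opial's Lemma will give exactly the strong convergence of $x(t)$ to a point of $\argmin g$ that we want. The nontrivial analytic work has already been packaged into the preceding lemmas, so this argument is essentially a bookkeeping assembly.

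First I would check condition $(i)$ of Lemma \ref{Opial}: for every $z=x^*\in\argmin g$, the limit $\lim_{t\To+\infty}\|x(t)-x^*\|$ must exist. This is precisely the content of Lemma \ref{limexist}, which holds under the present hypotheses $\a>3,\,\g>0,\,\b\in\R$. Next I would check condition $(ii)$: every weak sequential cluster point of $x(t)$ lies in $S=\argmin g$. Because $\R^m$ is finite-dimensional and $x$ is bounded (established in Theorem \ref{convconverg}), the set of weak sequential cluster points is exactly the set $\omega(x)$ of cluster points of the trajectory; by Lemma \ref{limitpoint}$(ii)$ we have $\omega(x)\subseteq\argmin g$, so condition $(ii)$ is satisfied. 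Having verified both hypotheses, Opial's Lemma yields that $x(t)$ converges (weakly, hence strongly in $\R^m$) to some point of $\argmin g$ as $t\To+\infty$.

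I do not expect a genuine obstacle in this final step, since the two hard ingredients---the existence of the distance limit and the inclusion of cluster points in the solution set---are already available. The only points that need a brief mention for rigor are that $\argmin g\neq\emptyset$ is assumed, that boundedness of $x$ (so that $\omega(x)\neq\emptyset$) comes from Theorem \ref{convconverg}, and that the passage from weak to strong convergence is automatic in finite dimensions, as already noted in the remark following Lemma \ref{Opial}. If one wished to present the case $\g=0,\,\b\ge0$ as well, one would simply note, as in Remark \ref{Opialgnull} and the remark after Theorem \ref{convconverg}, that the same two lemmas remain valid there, so the identical Opial argument applies verbatim.
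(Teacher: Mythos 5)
Your proposal is correct and follows exactly the paper's own argument: the authors likewise observe that Lemma \ref{limexist} and Lemma \ref{limitpoint}(ii) supply conditions (i) and (ii) of the continuous Opial Lemma (Lemma \ref{Opial}) with $S=\argmin g$, whence convergence follows, with weak convergence coinciding with strong convergence since $H=\R^m$ is finite-dimensional. Your additional remark that boundedness of the trajectory (from Theorem \ref{convconverg}) guarantees $\omega(x)\neq\emptyset$ is a small but welcome point of rigor that the paper leaves implicit.
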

\begin{proof}
 Taking into account that the conclusion (ii) in Lemma \ref{limitpoint} and the conclusion of Lemma \ref{limexist} are exactly the conditions in the hypothesis of Lemma \ref{Opial} with $S=\argmin g,$ the conclusion of the theorem is straightforward.
\end{proof}

\begin{remark} As it was expected, Theorem \ref{convergencetraj} remains true if in its hypothesis we assume only that $\a>3,\, \g=0$ and $\b\ge 0.$ Indeed, note that under these assumptions the conclusion of Lemma \ref{limexist} holds, since $G$ from its proof becomes
$$G(t)= \left( t -\dfrac{\a\b}{t}  \right) \| \dot{x}(t) \|^2 +\frac12\|\dot{x}(t)\|^2+\frac12 \b^2\|\ddot{x}(t)\|^2$$
and according to Remark \ref{zerolimitgnull} $G(t)\in L^1([t_0,+\infty),\R).$  Moreover, it is obvious that there exists $t_1\ge t_0$ such that $G$ is non negative for every $t\ge t_1.$

This fact combined with Remark \ref{Opialgnull} lead to the desired conclusion.
\end{remark}

\section{Conclusions}

In this paper we study a second order dynamical system which can be viewed as an extension of the heavy ball system with vanishing damping. This dynamical system is actually a perturbed version of the heavy ball system with vanishing damping, but the perturbation is made in the argument of the gradient of the objective function. Numerical experiments show that this perturbation brings a smoothing effect in the behaviour of the energy error $g(x(t))-\min g$ and also in the behaviour of the absolute error $\|x(t)-x^*\|$, where $x(t)$ is a generated trajectory by our dynamical system and $x^*$ is a minimum of the objective $g.$ Another novelty of our system that via explicit discretization leads to inertial algorithms. A related future research is the convergence analysis of  algorithm \eqref{alggen}, since this algorithm contains as particular case the famous Nesterov algorithm. However, since Algorithm \eqref{alggen} may allow  different inertial steps, its area of applicability can be considerable.

We have shown existence and uniqueness of the trajectories generated by our dynamical system even for the case the objective function $g$ is non-convex. Further, we treated the cases when the energy error $g(x(t))-\min g$ is of order $\mathcal{O}\left(\frac{1}{t^2}\right)$ and we obtained the convergence of a generated trajectory to a minimum of the objective function $g.$
Another related research is the convergence analysis of the generated trajectories in the case when the objective function  $g$ is possible non-convex. This would be a novelty in the literature even  for the case $\a>3,\,\g=\b=0$, that is, for the case of the heavy ball system with vanishing damping.

We underline that the dynamical system \eqref{dysy} can easily be extended to proximal-gradient dynamical systems (see \cite{BCL,BCL-MM} and the references therein). Indeed, let $f:\R^m\To\ol\R$ be a  proper convex and lower semicontinuous function and let $g:\R^m\To\R$ be a possible non-convex smooth function with $L_g$ Lipschitz continuous gradient.

We recall that the proximal point operator of the convex function $\l  f$ is defined as
\begin{equation*}  \prox\nolimits_{\l  f} : {\R^m} \rightarrow {\R^m}, \quad \prox\nolimits_{\l f}(x)=\argmin_{y\in {\R^m}}\left \{f(y)+\frac{1}{2\l}\|y-x\|^2\right\}.
\end{equation*}

Consider the optimization problem
$$\inf_{x\in\R^m}f(x)+g(x).$$

One can associate to this optimization problem the following second order proximal-gradient dynamical system.

\begin{equation}\label{fist}\left\{\begin{array}{lll}
\ds \ddot{x}(t)+\left(\g+\frac{\a+\b}{t}\right)\dot{x}(t)+x(t)=\prox\nolimits_{\l f}\left(\left(x(t)+\left(\g+\frac{\b}{t}\right)\dot{x}(t)\right)-\l\n g\left(x(t)+\left(\g+\frac{\b}{t}\right)\dot{x}(t)\right)\right),\\
\\
\ds x(t_0)=u_0,\,\dot{x}(t_0)=v_0,
\end{array}\right.
\end{equation}
where $\a>0,\,\g>0,\,\l>0,\,\b\in\R$  and $t_0>0.$

Obviously, when $f\equiv 0$ and $\l=1$ then \eqref{fist} becomes the dynamical system \eqref{dysy} studied in the present paper.

The discretization of the the dynamical system \eqref{fist} leads to proximal-gradient inertial algorithms, similar to the modified FISTA algorithm studied by Chambolle and Dossal in \cite{ch-do2015}, (see also \cite{bete09}).

Indeed, the explicit discretization of \eqref{dysy} with respect to the time variable $t$, with step size $h_n$  and initial points $x_0:=u_0,\,x_1:=v_0$ yields the iterative scheme
$$\frac{x_{n+1} - 2x_n + x_{n-1}}{h_n^2} + \left(\g+\frac{\a+\b}{n h_n}\right) \frac{x_{n} - x_{n-1}}{h_n} + x_{n+1} =$$
 $$\prox\nolimits_{\l f}\left( x_n+\left(\g+\frac{\b}{n h_n}\right) \frac{x_{n} - x_{n-1}}{h_n} -\l \nabla g\left(x_n+\left(\g+\frac{\b}{n h_n}\right) \frac{x_{n} - x_{n-1}}{h_n}\right)\right)  \ \forall n \geq 1.$$
 
 The latter can be expressed as
 
 $$(1+h_n^2)x_{n+1}=x_n+\left(1-\g h_n-\frac{\a+\b}{n}\right)(x_n-x_{n-1})+$$
 $$h_n^2 \prox\nolimits_{\l f}\left( x_n+\left(\g+\frac{\b}{n h_n}\right) \frac{x_{n} - x_{n-1}}{h_n} -\l \nabla g\left(x_n+\left(\g+\frac{\b}{n h_n}\right) \frac{x_{n} - x_{n-1}}{h_n}\right)\right).$$
 
 Consequently, the dynamical system \eqref{fist} leads to the algorithm:
For $x_0,\,x_{1}\in\R^m$ consider
\begin{equation}\label{fist1}\left\{\begin{array}{llll}
\ds y_n=x_n+\left(1-\g h_n-\frac{\a+\b}{n}\right)(x_n-x_{n-1}),\\
\\
\ds z_n=x_n+\left(\frac{\g}{h_n}+\frac{\b}{n h_n^2}\right) (x_{n} - x_{n-1}),\\
\\
\ds x_{n+1}=\frac{1}{1+h_n^2}y_n+\frac{h_n^2}{1+h_n^2}\prox\nolimits_{\l f}(z_n-\l\n g(z_n)),
\end{array}\right.
\end{equation}
where $\a\ge 0,\,\g\ge 0$ and $\b\in\R.$  

Now, the simplest case is obtained by taking in \eqref{fist1} constant step size $h_n\equiv 1$. By denoting $\a+\b$ still with $\a\in\R$, the Algorithm
\eqref{fist1} becomes:
For $x_0,x_{1}\in\R^m$ and for every $n\ge 1$ consider
\begin{equation}\label{fist2}\left\{\begin{array}{llll}
\ds y_n=x_n+\frac{(1-\g)n-\a}{n}(x_n-x_{n-1}),\\
\\
\ds z_n=x_n+\frac{\g n+\b}{n} (x_{n} - x_{n-1}),\\
\\
\ds x_{n+1}=\frac{1}{2}y_n+\frac{1}{2}\prox\nolimits_{\l f}(z_n-\l\n g(z_n)),
\end{array}\right.
\end{equation}
where $\a\ge \b,\,\g\ge 0$ and $\b\in\R.$  
The convergence of the sequences generated by Algorithm \eqref{fist2} to a critical point of the objective function $f+g$  would open the gate for the study of FISTA type algorithms with nonidentical inertial terms.

\end{document}